\def\BBox{\kern  -0.2cm\hbox{\vrule width 0.2cm height 0.2cm}}
 \gdef\xxxmark{%
   \expandafter\ifx\csname @mpargs\endcsname\relax % in minipage?
     \expandafter\ifx\csname @captype\endcsname\relax % in figure/caption?
       \marginpar{{xxx}}% not in a caption or minipage, can use marginpar
     \else
       {xxx} % notice trailing space
     \fi
   \else
     {xxx} % notice trailing space
   \fi}
 \gdef\xxx{\@ifnextchar[\xxx@lab\xxx@nolab}
 \long\gdef\xxx@lab[#1]#2{{\bf [\xxxmark #2 ---{\sc #1}]}}
 \long\gdef\xxx@nolab#1{{\bf  [\xxxmark #1]}}}
\newtheorem{theorem}{Theorem}
\newtheorem{lemma}[theorem]{Lemma}
\newtheorem{corollary}[theorem]{Corollary}
\newtheorem{proposition}[theorem]{Proposition}
\newcommand{\Po}{\mathcal{P}}
\newcommand{\K}{\mathcal{K}}
\newcommand{\s}{\sigma}
\newcommand{\Q}{\mathcal{Q}}
\newcommand{\calP}{\mathcal P}
\newcommand{\calK}{\mathcal K}
\newcommand{\G}{\Gamma}
\newcommand{\calR}{\mathcal R}
\newcommand{\ch}[1]{\overline{#1}}
\newcommand{\eps}{\varepsilon}
\title{Chiral extensions of chiral polytopes}
\author{Gabe Cunningham\thanks{gabriel.cunningham@gmail.com
617-651-2358} \and Daniel Pellicer\thanks{pellicer@matmor.unam.mx}}
\begin{document}
\maketitle

\begin{abstract}
Given a chiral $d$-polytope $\mathcal K$ with regular facets, we describe a construction for a chiral
$(d+1)$-polytope $\Po$ with facets isomorphic to $\mathcal K$. Furthermore, $\Po$ is finite whenever $\K$ is finite.
We provide explicit examples of chiral 4-polytopes constructed in this way from chiral toroidal maps.
\end{abstract}

\section{Introduction}
\label{SectionIntroduction}
Abstract polytopes are combinatorial structures that mimic convex polytopes in several key ways.
They also generalize (non-degenerate) maps on surfaces and face-to-face tessellations of euclidean,
hyperbolic, and projective spaces. Regular polytopes have full symmetry by (abstract) reflections and
have been extensively studied \cite{ARP}. One of the most important problems in the study of regular polytopes
is the \emph{extension problem}: given a regular polytope $\calK$ of rank $d$, what sorts of regular polytopes
of rank $d+1$ have facets isomorphic to $\calK$? Though this problem is far from solved, many useful
partial results already exist (see, for example, \cite{dan} and \cite{arr}). In particular, if $\calK$ is a finite regular polytope, then
\cite{pre2} shows how to construct infinitely many finite regular polytopes with facets isomorphic to $\calK$.

Another important class of polytopes are the \emph{chiral polytopes}, which have full rotational symmetry,
but no symmetry by reflection. There are many examples in ranks 3 and 4 (see \cite{chiral-mix, chiregas2}
for some of these). In higher ranks, however, we have only a handful of concrete examples. 

Many of the important unsolved problems of chiral polytopes are summarized in \cite{chiral-problems}.
Problems 24-30 all concern the extension problem for chiral polytopes, signifying both the importance
of that general problem and how little is known. An important partial result was given in
\cite{higherranks}, where it is shown how to build a finite chiral polytope of rank $d+1$ with
facets isomorphic to a finite regular polytope $\calK$ of rank $d$. There are very restrictive conditions on
the polytope $\calK$, however, so more work remains to be done even on this piece of the extension
problem (Problem 27 of \cite{chiral-problems}).

In this paper we use GPR graphs (as defined in \cite{pewe}) to build chiral polytopes of rank $d+1$ 
with facets isomorphic to a given chiral polytope of rank $d$. In particular, Theorem~\ref{MainProposition}
implies the following:
\begin{theorem}\label{MainTheorem}
Every finite chiral $d$-polytope with regular facets is itself the facet of a finite chiral $(d+1)$-polytope.
\end{theorem}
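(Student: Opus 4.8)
The plan is to realize the rotation group of the desired extension $\Po$ as an explicit permutation group, encoded by a GPR graph, and then read polytopality off the combinatorics of that graph. Write $\G^+(\K)=\langle\s_1,\dots,\s_{d-1}\rangle$ for the automorphism (rotation) group of $\K$, and let $\fl$ be the regular facet of $\K$, so that $\langle\s_1,\dots,\s_{d-2}\rangle$ is the rotation subgroup of the full group $\langle\rho_0,\dots,\rho_{d-2}\rangle$ of $\fl$. To obtain a chiral $(d+1)$-polytope with facets isomorphic to $\K$ it suffices to exhibit a group $\G^+=\langle\s_1,\dots,\s_d\rangle$ in which $\langle\s_1,\dots,\s_{d-1}\rangle\cong\G^+(\K)$, equipped with a new generator $\s_d$, such that the rotation relations $(\s_i\s_{i+1}\cdots\s_j)^2=\eps$ hold for all $1\le i<j\le d$ and the intersection condition is satisfied. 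Finiteness of $\Po$ is then automatic, since $\G^+$ will be produced as a group of permutations of a finite set whenever $\K$ is finite.

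First I would take a GPR graph $\mathcal G_\K$ realizing $\s_1,\dots,\s_{d-1}$ on a finite vertex set, and assemble the graph of $\Po$ from copies of $\mathcal G_\K$ indexed by the facets around a single ridge, that is, by the vertices of a $p_d$-gon for a suitably chosen even period $p_d$. The generator $\s_d$ is then defined to permute these copies cyclically while stabilizing the common ridge $\fl$, so that $(\s_{d-1}\s_d)^2=\eps$ holds by construction. The essential point, and the reason the facets of $\K$ must be regular, is the family of relations $(\s_i\cdots\s_d)^2=\eps$ for $i\le d-2$: these force the interaction of $\s_d$ with the facet group to realize, on the ridge group $\langle\s_1,\dots,\s_{d-2}\rangle$, the involutory automorphism induced by a reflection of $\fl$. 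Such an orientation-reversing automorphism exists precisely because $\fl$ is regular; were $\fl$ chiral, no consistent $\s_d$ could be defined. Equivalently, around each ridge $\cong\fl$ the facets appear in the alternating pattern $\K,\ch\K,\K,\ch\K,\dots$, and the regular (hence achiral) ridge is exactly what permits a copy of $\K$ to be glued to its mirror image $\ch\K$ along their shared facet, which forces $p_d$ to be even.

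With $\s_d$ in hand I would verify the defining relations directly from the graph and then establish the intersection condition, which I expect to be the main obstacle. The standard route is to argue by rank: the condition among $\s_1,\dots,\s_{d-1}$ is inherited from $\K$, so the genuinely new identity to check is $\langle\s_1,\dots,\s_{d-1}\rangle\cap\langle\s_2,\dots,\s_d\rangle=\langle\s_2,\dots,\s_{d-1}\rangle$, together with its companions involving $\s_d$. Here the block structure of the assembled graph gives the needed control: each copy of $\mathcal G_\K$ is a block for the $\s_d$-action, so an element lying in both subgroups must respect the partition into copies and simultaneously lie in $\G^+(\K)$, which pins it down to the ridge group. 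Granting the intersection condition, $\G^+$ is the rotation group of a polytope $\Po$ whose facets are the sections stabilized by $\langle\s_1,\dots,\s_{d-1}\rangle$, hence isomorphic to $\K$.

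Finally I would confirm that $\Po$ is chiral rather than directly regular. The facets of a regular polytope are themselves regular; since the facets of $\Po$ are isomorphic to the chiral polytope $\K$, the polytope $\Po$ cannot be regular, and as its rotation group has exactly two flag orbits with adjacent flags in distinct orbits, $\Po$ is chiral. Finiteness follows at once, because $\G^+$ is generated by permutations of the finite vertex set of the assembled graph. The principal difficulty throughout is the intersection condition; the definition of $\s_d$ through the regular ridge is what makes every relation and, ultimately, that condition verifiable.
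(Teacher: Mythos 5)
Your proposal has a genuine gap, and it starts with the architecture itself. You assemble $p_d$ copies of a GPR graph of $\K$ in a cycle and let $\s_d$ permute them cyclically, asserting that $(\s_{d-1}\s_d)^2=\eps$ then ``holds by construction.'' It cannot: each copy is a GPR graph realizing $\s_1,\dots,\s_{d-1}$, so $\s_{d-1}$ preserves every copy, and hence $\s_{d-1}\s_d$ induces on the set of copies the \emph{same} cyclic shift as $\s_d$. Consequently $(\s_{d-1}\s_d)^2$ shifts the cycle of copies by two steps and is never the identity once $p_d>2$; an involution can only induce a permutation of order at most $2$ on the blocks, so the number of copies is forced to be at most two. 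This is precisely why the paper's construction uses exactly \emph{two} copies $G_\K$, $G_\K'$ of $Cay(\K)$, and represents not $\s_d$ but the half-turn $\tau_{d-1,d}=\s_{d-1}\s_d$ by an (undirected) perfect matching between them; then $\s_d=\s_{d-1}^{-1}\tau$ swaps the two copies, and its order $p_d$ is not prescribed in advance but emerges from the construction (it is even, and equals $2(n-2)$ in the rank-$4$ examples of Section~\ref{SectionExamples}). Your identification of the copies with ``the facets around a ridge'' also conflates the GPR graph---whose vertices are flags/group elements and which has only two $(1,d-1)$-components---with the face lattice of $\Po$, which has many facets.

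The second gap is the intersection condition, which you correctly identify as the main obstacle but then dispatch with an argument that does not work. Knowing that an element of $\langle\s_1,\dots,\s_{d-1}\rangle\cap\langle\s_k,\dots,\s_d\rangle$ preserves the partition into copies pins down nothing: a word in $\s_k,\dots,\s_d$ that returns to the original copy may a priori act inside it in ways not expressible in $\s_k,\dots,\s_{d-1}$. What is actually needed is condition (d) of Theorem~\ref{cayleyintersection}: for each $k$, the matching must carry the $(k,d-1)$-component $E_k$ of the base vertex onto the $(k,d-1)$-component of its partner, so that the $(k,d)$-component through $v_1$ meets $G_\K$ in exactly $E_k$; only then does Corollary~\ref{idcayley} force the intersection down to $\langle\s_k,\dots,\s_{d-1}\rangle$. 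Securing this is the entire technical content of the paper's matching: one matched pair $v_i\mapsto v_i'$ is chosen in $A\cap E_k$ for each $(1,d-2)$-component $A$, and the matching is then propagated by the rule $v_i\alpha\mapsto v_j'\,\ch{\alpha}$ using enantiomorphic words---a rule that is well defined precisely because the facets of $\K$ are regular, so that conjugation by $\rho_{d-2}$ exists and $\ch{\alpha}=\rho_{d-2}\alpha\rho_{d-2}$ stays in the facet's rotation group, and which moreover sends $\langle\s_k,\dots,\s_{d-2}\rangle$ into $\langle\s_k,\dots,\s_{d-1}\rangle$ as the proof of Theorem~\ref{MainProposition} requires. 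Your intuition about where regularity of the facets enters (the mirror gluing of $\K$ to $\ch{\K}$) is sound and matches the paper, as do your closing remarks on chirality and finiteness; but without the explicit gluing map and the verification that it respects the $(k,d-1)$-components, polytopality of $\Po$ does not follow, and the construction as described contradicts the very relations it is supposed to satisfy.
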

This gives a partial answer to Problem 26 in \cite{chiral-problems}. We note that the assumption that
the chiral $d$-polytope has regular facets is necessary (see \cite[Proposition 9]{chiregas}). 

We will start by giving background on polytopes in Section~\ref{SectionDefinitions} and on GPR graphs in Section~\ref{SectionGraphs}.
Section~\ref{SectionResult} details the main construction, culminating in Theorem~\ref{MainProposition}.
Finally, in Section~\ref{SectionExamples}, we will apply the construction to a family of chiral toroidal maps
$\{4, 4\}_{(b, c)}$ and analyze the structure of the resulting chiral polytope of rank 4.

\section{Regular and chiral polytopes}
\label{SectionDefinitions}

   In this section we introduce abstract regular and chiral polytopes,
referring to \cite{ARP} and \cite{chiregas} for details.

   An {\em (abstract) $d$-polytope} $\mathcal K$
is a partially ordered set whose elements are called {\em faces} and which satisfies the
following properties. It contains a minimum face $F_{-1}$ and a maximum face $F_d$, and every
{\em flag} of ${\mathcal K}$ (maximal totally ordered subset) contains precisely
$d+2$ elements, including $F_{-1}$ and $F_d$. This induces a rank function from
$\mathcal K$ to the set $\{-1, 0, \dots, d\}$ such that $rank(F_{-1})= -1$ and
$rank(F_d)=d$. The faces of rank $i$ are called {\em $i$-faces}, the $0$-faces are called
{\em vertices}, the $1$-faces are called {\em edges} and the $(d-1)$-faces are
called {\em facets}. Furthermore, we say that $\K$ has rank $d$. In analogy with convex
polytopes, an abstract 3-polytope is also called a \emph{polyhedron}. We shall abuse notation
and identify the {\em section} $G/F_{-1} := \{H \,|\, H\le G\}$ with the face $G$ itself. Given a vertex $v$, the section
$F_d/v := \{H \,|\, H\ge v\}$ is called the {\em vertex-figure} of $\mathcal K$ at $v$.
For every pair of incident faces $F \leq G$ such that $rank(G) - rank(F) = 2$, there
exist precisely two faces $H_1$ and $H_2$ such that $F < H_1, H_2 < G$. This property
is referred to as the {\em diamond condition}. As a consequence of the diamond condition,
for any flag $\Phi$ and any $i \in \{0, \dots, d-1\}$ there exists a unique flag
$\Phi^i$ that differs from $\Phi$ only in the $i$-face. This flag is called the {\em $i$-adjacent flag} of $\Phi$. Finally,
$\mathcal K$ must be {\em strongly flag-connected}, meaning that for any two flags
$\Phi$, $\Phi'$ there exists a sequence of flags
$\Phi = \Psi_0, \Psi_1, \dots, \Psi_m = \Phi'$
such that $\Phi \cap \Phi' \subseteq \Psi_k$, and $\Psi_{k-1}$ is adjacent
to $\Psi_k$ for $k = 1, \dots, m$.

	If $F$ is an $(i-2)$-face and $G$ is an $(i+1)$-face of the $d$-polytope $\calK$, with
$F < G$, then the section $G/F := \{H \mid F \leq H \leq G\}$ is an abstract polygon.
If $\calK$ has the property that the type of each of these sections depends only on $i$
(and not on the particular choice of $F$ and $G$), then we say that $\calK$ is
\emph{equivelar}. In this case, $\calK$ has a {\em Schl\"afli type} (or \emph{Schl\"afli symbol})
$\{p_1, \ldots, p_{d-1}\}$, where the section $G/F$ is a $p_i$-gon whenever $F$ is an $(i-2)$-face and
$G$ is an $(i+1)$-face with $F < G$. The numbers $p_i$ satisfy $2 \leq p_i \leq \infty$, but
in this paper we will always have $3 \leq p_i$. Regular and chiral polytopes, defined
below, are examples of equivelar polytopes.

   An {\em automorphism} of a $d$-polytope $\mathcal K$ is an order-preserving permutation
of its faces. The group of automorphisms of $\K$ is denoted by
$\Gamma({\mathcal K})$. There is a natural action of $\G(\calK)$ on the flags of
$\calK$, and we say that $\mathcal K$ is {\em regular} if this action is transitive.
In this case, $\Gamma({\mathcal K})$ is
generated by involutions $\rho_0, \dots, \rho_{d-1}$, where $\rho_i$ is the unique automorphism mapping a fixed {\em base flag} $\Phi$ to its
$i$-adjacent flag $\Phi^i$. These generators satisfy the relations
\begin{eqnarray*}
\rho_i^2 &=& \varepsilon, \\
(\rho_i \rho_j)^2 &=& \varepsilon \quad \mbox{whenever $|i-j| \ge 2$,}
\end{eqnarray*}
where $\varepsilon$ denotes the identity element.
Regular polytopes are equivelar, and the order of the element $\rho_{i-1} \rho_i$
coincides with the Schl\"afli number $p_i$.

The generators $\{\rho_0, \ldots, \rho_{d-1}\}$ also satisfy the intersection
conditions given by
\begin{equation}\label{refinter}
\langle \rho_i \,|\, i \in I \rangle \cap \langle \rho_i \,|\, i \in J \rangle = \langle
\rho_i \,|\, i \in I \cap J\rangle,
\end{equation}
for all $I, J \subseteq \{0, \dots, d-1\}$.

A {\em string C-group} is a group together with a generating set $\{\rho_0, \dots, \rho_{d-1}\}$ such that the generators $\rho_i$
are involutions satisfying the relation $(\rho_i \rho_j)^2 = \varepsilon$ for $|i-j| \ge 2$, and the intersection condition
(\ref{refinter}). The string C-groups are in a one-to-one
correspondence with the automorphism groups of regular polytopes; in particular, every regular
polytope can be reconstructed from its automorphism group.

   The {\em rotation subgroup} of (the automorphism group of) a regular $d$-polytope
$\mathcal K$ is defined as the subgroup $\Gamma^+({\mathcal K})$ of $\Gamma({\mathcal K})$ consisting of all elements that can be
expressed as words of even length on the generators $\rho_0, \dots, \rho_{d-1}$. The index of $\Gamma^+({\mathcal K})$ in
$\Gamma({\mathcal K})$ is at most $2$. Whenever $\Gamma^+({\mathcal K})$ has index $2$
in $\Gamma({\mathcal K})$ we say that $\mathcal K$ is {\em orientably regular}; other sources also use
the term \emph{directly regular} (see, for example, \cite{chiregas}).

For $i=1, \dots, d-1$ we define the
{\em abstract rotation} $\sigma_i$ to be $\rho_{i-1} \rho_i$, that is, the
automorphism of $\mathcal K$ mapping the base flag $\Phi$ to $(\Phi^i)^{i-1}$.
Then $\Gamma^+({\mathcal K}) = \langle \sigma_1, \dots, \sigma_{d-1} \rangle$
and the abstract rotations satisfy the relations
\begin{equation}\label{sigmarelation}
(\sigma_i \cdots \sigma_j)^2 = \varepsilon
\end{equation}
for $i < j$. The order of $\sigma_i$ is just the entry $p_i$ in the
Schl\"afli symbol.

   We define {\em abstract half-turns} as the involutions
$\tau_{i, j} := \sigma_i \cdots \sigma_j$ for $i<j$. For consistency we also
define $\tau_{0, i} := \tau_{i, d} := \varepsilon$ and
denote $\sigma_i$ by $\tau_{i, i}$.
Then the abstract rotations and
half-turns satisfy the intersection condition given by
\begin{equation}\label{intersectprop}
\langle \tau_{i, j} \,|\, i \le j; i-1, j \in I \rangle \cap
\langle \tau_{i, j} \,|\, i \le j; i-1, j \in J \rangle =
\langle \tau_{i, j} \,|\, i \le j; i-1, j \in I \cap J \rangle
\end{equation}
for $I, J \subseteq \{-1, \dots, d\}$.

   We say that the $d$-polytope $\mathcal K$ is {\em chiral} if its automorphism group
induces two orbits on flags with the property that adjacent flags always belong to
different orbits.  The facets and vertex-figures of a chiral polytope must be either orientably regular or chiral, and the
$(d-2)$-faces must be orientably regular (see \cite[Proposition 9]{chiregas}).

   The automorphism group $\Gamma({\mathcal K})$ of a chiral polytope is generated
by elements $\sigma_1, \dots, \sigma_{d-1}$, where
$\sigma_i$ maps a base flag $\Phi$ to $(\Phi^i)^{i-1}$. That is, $\sigma_i$
cyclically permutes the $i$- and $(i-1)$-faces of $\mathcal K$ incident
with the $(i-2)$- and $(i+1)$-faces of $\Phi$. Furthermore, the generators
$\sigma_i$ also satisfy (\ref{sigmarelation}) as well as the intersection conditions
(\ref{intersectprop}). Because of the obvious similarities between the automorphism
group of a chiral polytope and the rotation subgroup of a regular polytope we shall also
refer to the generators $\sigma_i$ of the automorphism group of a chiral polytope as
{\em abstract rotations}, to the products $\tau_{i, j} :=
\sigma_i \sigma_{i+1} \cdots \sigma_j$ ($i < j$)
as {\em abstract half-turns}, and to the automorphism group of a chiral polytope
as its {\em rotation subgroup}. When convenient, we shall denote the automorphism group
$\Gamma(\K) = \langle \sigma_1, \dots, \sigma_{d-1} \rangle$ of the chiral polytope
$\K$ by $\Gamma^+(\K)$.

   Each chiral $d$-polytope $\K$ occurs in two {\em enantiomorphic forms}, which are, in a sense, a right-
and left-handed version which can be thought of as mirror images of each other. When we fix a base flag
of a chiral polytope, we are essentially choosing an orientation, and the presentation for the automorphism
group depends on which flag we pick. If $\calK$ is a chiral polytope (with a chosen base flag $\Phi$), then we denote
the remaining enantiomorphic form by $\ch{\calK}$, which corresponds to choosing $\Phi^{d-1}$ as the base flag instead. In fact, since all flags adjacent to $\Phi$ are in the same orbit under $\Gamma(\K)$, we may choose the base flag of $\ch{\K}$ to be $\Phi^j$ for any $j \in \{0, \dots, d-1\}$.  By picking $j < d-1$, it is clear that if $\Po$ is an enantiomorphic form of a chiral polytope with chiral facets isomorphic to $\K$, then the facets of $\ch{\Po}$ are isomorphic to $\ch{\K}$.
For further details, see \cite{chiregas2}.

	If $w$ is a word on the generators $\{\s_1, \ldots, \s_{d-1}\}$ then we define the \emph{enantiomorphic word}
$\ch{w}$ to be the word obtained from $w$ by changing every factor $\s_{d-1}$ to $\s_{d-1}^{-1}$ and $\s_{d-2}$ to $\s_{d-2} \s_{d-1}^2$.
Note that if $\K$ is regular then $\ch{w} = \rho_{d-1} w \rho_{d-1}$.
In any case, $w = \eps$ in $\G(\calK)$ if and only if $\ch{w} = \eps$ in $\G(\ch{\calK})$ (see \cite{chiral-mix}).

	We can measure the degree of chirality of a polytope by using its \emph{chirality group} (see \cite{chiral-mix, Gabe}).
Given a chiral $d$-polytope $\calK$, we consider the largest group $\Delta$ with generators $\mu_1, \dots, \mu_{d-1}$ that is covered by both $\G(\calK)$ and
$\G(\ch{\calK})$ by mapping the $i$-th abstract rotation to $\mu_i$. We can obtain $\Delta$ by adding the relations from $\G(\ch{\calK})$ to those of $\G(\calK)$, rewriting the
relations to use the generators of $\G(\calK)$ instead of the generators of $\G(\ch{\calK})$. Then the chirality group $X(\K)$ of
$\calK$ is defined to be the kernel of the cover from $\G(\ch{\calK})$ to $\Delta$. Equivalently, if
$\G(\calK)$ has presentation $\langle \s_1, \ldots, \s_{d-1} \mid \calR \rangle$, where $\calR$ is the set of
defining relators of $\G(\calK)$, then
\[ X(\calK) = \langle \ch{\calR} \rangle^{\G(\calK)}, \]
the normal closure of $\ch{\calR}$ in $\G(\calK)$, where $\ch{\calR} := \{\ch{w} \mid w \in \calR\}$.

   It was proved in \cite{chiregas} that any group $\Gamma$ together with a generating set
$\{\sigma_1, \dots, \sigma_{d-1}\}$ satisfying (\ref{sigmarelation}) and the intersection
conditions in (\ref{intersectprop}) is the rotation subgroup of a polytope $\calP$ that is either
orientably regular or chiral. Furthermore,
if $\langle \s_1, \ldots, \s_{d-2} \rangle$ is the automorphism group of a chiral polytope (in which
case $\calP$ has chiral facets), then $\calP$ is itself chiral.

	In order to check the relations (\ref{sigmarelation}), we will use \cite[Lemma 1]{higherranks}:

\begin{lemma}\label{sigmastau}
   Let $d \ge 4$, let $\Gamma$ be a group generated by elements $\sigma_1,
\dots, \sigma_{d-1}$, and let $\tau_{d-2, d-1} := \sigma_{d-2} \sigma_{d-1}$.
If the relations (\ref{sigmarelation})
hold for every $j<d-1$, then the set of relations
$(\sigma_k \cdots \sigma_{d-1})^2 = \varepsilon$,
$k = 1, \dots, d-2$ is equivalent to the set of relations
\begin{eqnarray}\label{equa1}
&&\tau_{d-2, d-1}^2 = \varepsilon\\ \label{equa2}
&&\tau_{d-2, d-1} \sigma_{d-3} \tau_{d-2, d-1} = \sigma_{d-3}^{-1}\\ \label{equa3}
&&\tau_{d-2, d-1} \sigma_{d-4} \tau_{d-2, d-1} = \sigma_{d-4} \sigma_{d-3}^2\\\label{equa4}
&&\tau_{d-2, d-1} \sigma_j = \sigma_j \tau_{d-2, d-1} \quad \mbox{for $j < d-4$.}
\end{eqnarray}
\end{lemma}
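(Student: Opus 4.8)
The plan is to reduce everything to the single abbreviation $\tau := \tau_{d-2,d-1} = \sigma_{d-2}\sigma_{d-1}$ together with the partial products $w_k := \sigma_k \sigma_{k+1}\cdots\sigma_{d-3}$ for $k \le d-3$ (with $w_{d-2} := \varepsilon$), so that the relation $(\sigma_k\cdots\sigma_{d-1})^2 = \varepsilon$ we must analyze becomes simply $(w_k\tau)^2 = \varepsilon$. The first observation is that the case $k=d-2$ of this family is literally $\tau^2=\varepsilon$, which is (\ref{equa1}); and once $\tau^2=\varepsilon$ is in force, $(w_k\tau)^2=\varepsilon$ is equivalent to the conjugation identity $\tau w_k\tau^{-1} = w_k^{-1}$. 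Thus the lemma becomes a statement about how $\tau$ conjugates each $w_k$. The second, crucial observation is that the hypotheses---relations (\ref{sigmarelation}) for $j<d-1$---already force $w_k^2 = \varepsilon$, i.e.\ $w_k = w_k^{-1}$, for every $k \le d-4$ (take $i=k$, $j=d-3<d-1$). Hence for $k \le d-4$ the target identity $\tau w_k\tau^{-1} = w_k^{-1}$ is the same as $\tau w_k\tau^{-1} = w_k$, i.e.\ the statement that $\tau$ commutes with $w_k$.

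For the direction (\ref{equa1})--(\ref{equa4}) $\Rightarrow$ all target relations, I would compute $\tau w_k\tau^{-1}$ one factor at a time. Rewriting (\ref{equa2})--(\ref{equa4}) as conjugation formulas (using $\tau^{-1}=\tau$), one has $\tau\sigma_j\tau^{-1}=\sigma_j$ for $j<d-4$, $\tau\sigma_{d-4}\tau^{-1}=\sigma_{d-4}\sigma_{d-3}^2$, and $\tau\sigma_{d-3}\tau^{-1}=\sigma_{d-3}^{-1}$. Therefore, for $k\le d-4$,
\[ \tau w_k\tau^{-1} = \sigma_k\cdots\sigma_{d-5}\,(\sigma_{d-4}\sigma_{d-3}^2)\,(\sigma_{d-3}^{-1}) = \sigma_k\cdots\sigma_{d-4}\sigma_{d-3} = w_k, \]
the two copies of $\sigma_{d-3}$ telescoping away; since $w_k=w_k^{-1}$ this yields $(w_k\tau)^2=\varepsilon$. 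The case $k=d-3$ is immediate from $\tau\sigma_{d-3}\tau^{-1}=\sigma_{d-3}^{-1}$, and $k=d-2$ is (\ref{equa1}).

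For the converse I would read each of (\ref{equa1})--(\ref{equa4}) off the target relations. Relation (\ref{equa1}) is the case $k=d-2$; rewriting the case $k=d-3$ as $\tau\sigma_{d-3}\tau^{-1}=\sigma_{d-3}^{-1}$ gives (\ref{equa2}). For (\ref{equa3}) I would take the case $k=d-4$, which reads $\tau\sigma_{d-4}\sigma_{d-3}\tau^{-1}=(\sigma_{d-4}\sigma_{d-3})^{-1}=\sigma_{d-4}\sigma_{d-3}$ (again using $w_{d-4}=w_{d-4}^{-1}$), and then peel off the already-established (\ref{equa2}): since $\tau\sigma_{d-3}\tau^{-1}=\sigma_{d-3}^{-1}$, I get $(\tau\sigma_{d-4}\tau^{-1})\sigma_{d-3}^{-1}=\sigma_{d-4}\sigma_{d-3}$, whence $\tau\sigma_{d-4}\tau^{-1}=\sigma_{d-4}\sigma_{d-3}^2$. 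Finally (\ref{equa4}) for a fixed $j<d-4$ follows by comparing the cases $k=j$ and $k=j+1$: these say that $\tau$ commutes with $w_j$ and with $w_{j+1}$ respectively, and since $w_j=\sigma_j w_{j+1}$, cancelling $w_{j+1}$ forces $\tau$ to commute with $\sigma_j$.

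The only delicate point---and the step I would treat most carefully---is the repeated use of the involution property $w_k=w_k^{-1}$, which silently converts the \emph{inverting} conjugation $\tau w_k\tau^{-1}=w_k^{-1}$ demanded by the target relations into the \emph{commuting} conjugation $\tau w_k\tau^{-1}=w_k$ produced by (\ref{equa2})--(\ref{equa4}); this is precisely where the hypothesis that (\ref{sigmarelation}) holds for $j<d-1$ enters. I would also keep explicit track of the substitution $\tau^{-1}=\tau$, legitimate only after (\ref{equa1}) is available, so that the conjugation rewriting is valid throughout, and I would note that the displayed telescoping degenerates correctly at the endpoint $k=d-4$ (empty initial segment) and at $k=d-3$ (where $w_{d-3}=\sigma_{d-3}$ is a single generator).
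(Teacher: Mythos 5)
Your proof is correct. One structural point first: the paper does not actually prove this lemma — it quotes it as Lemma~1 of \cite{higherranks} — so there is no in-paper argument to compare against; your write-up serves as a complete, self-contained substitute for the citation. Both directions check out. The decomposition $\sigma_k\cdots\sigma_{d-1} = w_k\,\tau_{d-2,d-1}$ with $w_k = \sigma_k\cdots\sigma_{d-3}$, the reduction of $(w_k\tau_{d-2,d-1})^2=\varepsilon$ to the conjugation identity $\tau_{d-2,d-1} w_k \tau_{d-2,d-1}^{-1} = w_k^{-1}$ once (\ref{equa1}) is available, and the pivotal use of the hypothesis that (\ref{sigmarelation}) holds for $j<d-1$ to identify $w_k^{-1}$ with $w_k$ when $k\le d-4$ — this is exactly where the content of the lemma lies, and you isolate it correctly. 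The telescoping $\sigma_k\cdots\sigma_{d-5}(\sigma_{d-4}\sigma_{d-3}^2)\sigma_{d-3}^{-1}=w_k$ in the forward direction is right, as are the two peeling arguments in the converse: the case $k=d-4$ yields (\ref{equa3}) after cancelling the already-established (\ref{equa2}), and comparing the cases $k=j$ and $k=j+1$ yields (\ref{equa4}), which is legitimate because $j<d-4$ guarantees $j+1\le d-4$, so that $w_{j+1}$ is also an involution by hypothesis (this would fail at $w_{d-3}=\sigma_{d-3}$, but that case is never needed). The degenerate situations $d=4,5$, where (\ref{equa3}) and/or (\ref{equa4}) are vacuous, cause no trouble since the corresponding cases simply do not arise. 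As a final consistency check, your conjugation formulas $\tau_{d-2,d-1}\sigma_j\tau_{d-2,d-1}^{-1}=\sigma_j$, $\sigma_{d-4}\sigma_{d-3}^2$, $\sigma_{d-3}^{-1}$ are precisely the remark the paper makes immediately after the lemma: relations (\ref{equa2})--(\ref{equa4}) say that $\tau_{d-2,d-1}\sigma_j = \overline{\sigma_j}\,\tau_{d-2,d-1}$, with the enantiomorphic word taken with respect to the $(d-2)$-faces.
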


Note that (\ref{equa2}), (\ref{equa3}) and (\ref{equa4}) are equivalent to requiring that  $\tau_{d-2,d-1} \s_j$ equals $\overline{\s_j} \tau_{d-2, d-1}$ for $j \in \{1, \dots, d-3\}$, where the enantiomorphic word is taken with respect to the $(d-2)$-faces.

   The following criterion determines that some suitable groups  $\Gamma$ satisfy the intersection
condition (\ref{intersectprop}) and follows directly from \cite[Lemma 10]{chiregas}. For the purposes of this paper we only need the version where the facets of the chiral $d$-polytope corresponding to $\Gamma$ are themselves chiral $(d-1)$-poytopes.

\begin{lemma}\label{intersecteor}
   Let $n \ge 4$ and let $\Gamma = \langle \sigma_1, \dots, \sigma_{d-1} \rangle$ be a
group satisfying (\ref{sigmarelation}). If
$\langle \sigma_1, \dots, \sigma_{d-2} \rangle$ is the automorphism group of a chiral $(d-1)$-polytope with regular facets,
and if the intersection condition
\[\langle \sigma_1, \dots, \sigma_{d-2} \rangle \cap \langle \sigma_k, \dots,
\sigma_{d-1} \rangle = \langle \sigma_k, \dots, \sigma_{d-2} \rangle\]
holds for $k = 2, \dots, d-1$, then $\Gamma$ satisfies the intersection condition (\ref{intersectprop}).
\end{lemma}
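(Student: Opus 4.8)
The plan is to obtain the statement as a direct application of the inductive intersection criterion \cite[Lemma 10]{chiregas}, so that the real task is to check that its hypotheses coincide with the data we are handed. Write $\Gamma_I := \langle \tau_{i,j} \mid i \le j;\, i-1, j \in I \rangle$ for $I \subseteq \{-1, \dots, d\}$. As a preliminary normalization I would note that adjoining the endpoints $-1$ or $d$ to $I$ only adds the trivial half-turns $\tau_{0,j} = \tau_{i,d} = \varepsilon$, so $\Gamma_I$ depends only on $I \cap \{0, \dots, d-1\}$; hence it is enough to verify (\ref{intersectprop}) for $I, J \subseteq \{0, \dots, d-1\}$. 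Under this bookkeeping the facet subgroup $\langle \sigma_1, \dots, \sigma_{d-2}\rangle$ is $\Gamma_{\{0, \dots, d-2\}}$ and the tail subgroup $\langle \sigma_k, \dots, \sigma_{d-1}\rangle$ is $\Gamma_{\{k-1, \dots, d-1\}}$, so the assumed identities are exactly the intersections of the facet index set with the tail intervals $\{k-1, \dots, d-1\}$.

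I would then assemble the two ingredients the criterion requires. Since $\langle \sigma_1, \dots, \sigma_{d-2}\rangle$ is by hypothesis the automorphism group of a chiral $(d-1)$-polytope, it satisfies the rank-$(d-1)$ intersection condition (\ref{intersectprop}), as recalled earlier in this section; this already settles every pair $I, J$ with $d-1 \notin I \cup J$, since then both subgroups lie inside the facet group. The only additional input is the family of ``one new generator'' identities $\langle \sigma_1, \dots, \sigma_{d-2}\rangle \cap \langle \sigma_k, \dots, \sigma_{d-1}\rangle = \langle \sigma_k, \dots, \sigma_{d-2}\rangle$ for $k = 2, \dots, d-1$, which are precisely our hypotheses. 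These are exactly the hypotheses of \cite[Lemma 10]{chiregas} in the case of chiral facets, and its conclusion is (\ref{intersectprop}) for all $I, J$.

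If instead the criterion had to be reproved in this specialized form, I would argue by induction on $d$, taking the facet intersection condition as the base of the induction and splitting the inductive step according to whether $d-1$ belongs to $I$ and to $J$. The cases in which $d-1$ is missing from $I$ or from $J$ collapse into the facet group and are handled by the induction hypothesis. The substantive case is when $d-1 \in I \cap J$, and there the key is to upgrade the given tail conditions --- which only control intersections against the special intervals $\{k-1, \dots, d-1\}$ --- into the general ``facet-meets-arbitrary'' identity $\Gamma_{\{0, \dots, d-2\}} \cap \Gamma_J = \Gamma_{\{0, \dots, d-2\} \cap J}$ for every $J$. I expect this upgrade to be the main obstacle: extracting it requires using the relations (\ref{sigmarelation}), in the form of the half-turn structure, to rewrite a general element lying in both $\Gamma_I$ and $\Gamma_J$ so that its dependence on $\sigma_{d-1}$ is governed entirely by $I \cap J$, and it is exactly here that one needs the full family of tail conditions rather than a single one. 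Packaging this argument once and for all is what \cite[Lemma 10]{chiregas} accomplishes, which is why I would simply cite it.
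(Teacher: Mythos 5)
Your proposal takes essentially the same route as the paper: the paper offers no independent argument for this lemma, stating only that it ``follows directly from \cite[Lemma 10]{chiregas}'', which is precisely your strategy of citing that inductive criterion after observing that the facet group $\langle \sigma_1, \dots, \sigma_{d-2} \rangle$ satisfies the rank-$(d-1)$ intersection condition (being the automorphism group of a chiral $(d-1)$-polytope) and that the assumed identities supply the remaining hypotheses. Your bookkeeping with the subgroups $\Gamma_I$ and the sketched induction are correct but go beyond what the paper records.
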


   For any $d$-polytope $\K$ (not necessarily regular or chiral) and $i=1, \dots, d-1$
we define the (involutory) permutation $r_i$ on the flags of $\K$ by
$\Phi r_i = \Phi^i$. (That is, $r_i$ moves \emph{every} flag to its $i$-adjacent flag.)
The subgroup $Mon(\K) := \langle r_0, \dots, r_{d-1} \rangle$ of the symmetric
group on the set of flags of $\K$ is often
referred to as the {\em monodromy group} of $\K$ (see for example
\cite{selfinvariance}). Note that for any flag $\Phi$ of $\K$ and any
automorphism $\varphi \in \Gamma(\K)$, the definition of an automorphism of $\K$
implies that $(\Phi \varphi) r_i = (\Phi r_i) \varphi$. An inductive argument
can be used to show that, for any word $w$ on the generators $r_i$,
\begin{equation}\label{monaut}
(\Phi \varphi) w = (\Phi w) \varphi.
\end{equation}

It is well-known that if $\K$ is regular then $Mon(\K) \cong \Gamma(\K)$ (see \cite[Theorem 3.9]{MixMon}). On the other hand, if $\K$ is chiral then its automorphism group is related with its monodromy group in a slightly different way. Let $Mon(\K) = \langle r_0, \dots, r_{d-1} \rangle$ and let $s_i := r_{i-1} r_i$ for $i = 1, \dots, d-1$. The next proposition is essentially \cite[Proposition 7]{higherranks}.

\begin{proposition}\label{monoautid2}
   Let  $\K$ be a chiral $d$-polytope, let $\mathcal{O}$ be the orbit of the
base flag $\Phi$ of $\K$ under $\Gamma(\K)$,
and let $s_1, \dots, s_{d-1}$ be as above. Then there is a
group isomorphism between $\Gamma(\K)$ and the
permutation group on $\mathcal{O}$ induced by
$\langle s_1, \dots, s_{d-1} \rangle$, mapping $\sigma_i$ to the permutation induced by $s_i$.
\end{proposition}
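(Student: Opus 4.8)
The plan is to realize the permutation group induced by $\langle s_1,\dots,s_{d-1}\rangle$ on $\oo$ as the image of the regular representation of $\Gamma(\K)$, with $\s_i$ corresponding to $s_i$. Two preliminary facts set this up. First, since $\K$ is chiral, $\Gamma(\K)$ has exactly two flag-orbits and any two adjacent flags lie in different orbits; as $\Psi r_i = \Psi^i$ is $i$-adjacent to $\Psi$, each $r_i$ interchanges the two orbits, so each $s_i = r_{i-1}r_i$ preserves $\oo$. Hence $\langle s_1,\dots,s_{d-1}\rangle$ acts on $\oo$, and restriction gives a homomorphism onto the induced permutation group; I write $\bar s_i$ for the restriction of $s_i$ to $\oo$. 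Second, $\Gamma(\K)$ acts regularly on $\oo$: it is transitive by the definition of the orbit, and it is free because an automorphism fixing one flag fixes them all. The latter follows from (\ref{monaut})---which holds verbatim with any flag in place of $\Phi$---together with the transitivity of $\Mon(\K)$ on flags guaranteed by strong flag-connectivity: if $\gamma$ fixes $\Psi$ and $\Psi' = \Psi w$, then $\Psi'\gamma = (\Psi w)\gamma = (\Psi\gamma)w = \Psi'$. Consequently $\beta\colon \Gamma(\K)\to\oo$, $\beta(\varphi)=\Phi\varphi$, is a bijection.

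The crux is to compute how $s_i$ moves the base flag. Using that the $r_j$ are involutions, $\Phi s_i = \Phi r_{i-1}r_i = (\Phi^{i-1})^i$, while $\Phi\s_i = (\Phi^i)^{i-1} = \Phi r_i r_{i-1} = \Phi(r_{i-1}r_i)^{-1} = \Phi s_i^{-1}$. From the general inversion principle (if $\Phi\varphi = \Phi w$ then $\Phi\varphi^{-1} = \Phi w^{-1}$, an immediate consequence of (\ref{monaut})) I conclude $\Phi s_i = \Phi\s_i^{-1}$. Feeding this into (\ref{monaut}) for an arbitrary $\Psi = \Phi\varphi\in\oo$ gives $\Psi s_i = (\Phi s_i)\varphi = (\Phi\s_i^{-1})\varphi = \Phi(\s_i^{-1}\varphi)$. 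In other words, under the identification $\beta$ the permutation $\bar s_i$ is precisely left multiplication by $\s_i^{-1}$.

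With this in hand I would define $\Theta\colon\Gamma(\K)\to\mathrm{Sym}(\oo)$ by declaring $\Theta(\gamma)$ to be the permutation $\Phi\varphi\mapsto\Phi(\gamma^{-1}\varphi)$; this is well defined and bijective on $\oo$ because the action is regular. The inverse built into the definition is exactly what turns the left-translation action into a homomorphism rather than an anti-homomorphism: composing in the order dictated by the right action of $\Mon(\K)$ yields $\Phi\varphi\mapsto\Phi(\gamma_2^{-1}\gamma_1^{-1}\varphi)=\Phi((\gamma_1\gamma_2)^{-1}\varphi)$, so $\Theta(\gamma_1)\Theta(\gamma_2)=\Theta(\gamma_1\gamma_2)$. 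Injectivity is immediate from freeness. By the previous paragraph $\Theta(\s_i)=\bar s_i$, so the image of $\Theta$ is generated by the $\bar s_i$ and therefore equals the permutation group on $\oo$ induced by $\langle s_1,\dots,s_{d-1}\rangle$. Thus $\Theta$ is the asserted isomorphism carrying $\s_i$ to the permutation induced by $s_i$.

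I expect the only real obstacle to be bookkeeping: automorphisms, monodromy, and the induced permutations each have a natural side on which they act, and a careless composition would turn the sought isomorphism into an anti-isomorphism. The decisive point that keeps everything a genuine homomorphism is the identity $\Phi s_i = \Phi\s_i^{-1}$ together with the deliberate inverse in the definition of $\Theta$; once those are pinned down the remaining verifications are routine.
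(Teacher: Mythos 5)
Your proof is correct and complete. Note that the paper does not actually prove Proposition~\ref{monoautid2}; it only remarks that the statement is essentially \cite[Proposition 7]{higherranks}, so there is no in-text argument to compare against. Your argument is the natural (and essentially the standard) one: each $r_i$ swaps the two flag orbits of the chiral polytope, so the $s_i$ preserve $\mathcal{O}$; $\Gamma(\K)$ acts regularly on $\mathcal{O}$ thanks to (\ref{monaut}) and flag-connectivity; the computation $\Phi s_i = \Phi \sigma_i^{-1}$ identifies the restriction of $s_i$ with left multiplication by $\sigma_i^{-1}$ under the bijection $\varphi \mapsto \Phi\varphi$; and sending $\gamma$ to left multiplication by $\gamma^{-1}$ then gives an injective homomorphism whose image is the induced permutation group, carrying $\sigma_i$ to the permutation induced by $s_i$. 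You also correctly isolated and resolved the one genuine pitfall, namely the order-of-composition issue that would otherwise yield only an anti-homomorphism.
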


   Some examples of orientably regular and chiral polytopes are given by toroidal quotients
of the tessellation $\{4, 4\}$ of the Euclidean
plane by squares. Let $T$ denote the group of
translations of the plane which fixes this tessellation.
We denote by $\{4, 4\}_{\bar{u}}$ the quotient of the regular tessellation
$\{4, 4\}$ by the subgroup $T_{\bar{u}}$ of $T$ generated by the
translational symmetries with respect to the linearly independent vectors $\bar{u}$ and
$\bar{u} R$, where $R$ stands for the rotation by $\pi/2$. The chiral polyhedra $\{4,4\}_{(2,1)}$ and $\{4,4\}_{(4,1)}$ are shown in Figure \ref{toroidalmaps}.

\begin{figure}
\begin{center}
\includegraphics[width=5.5cm, height=3cm]{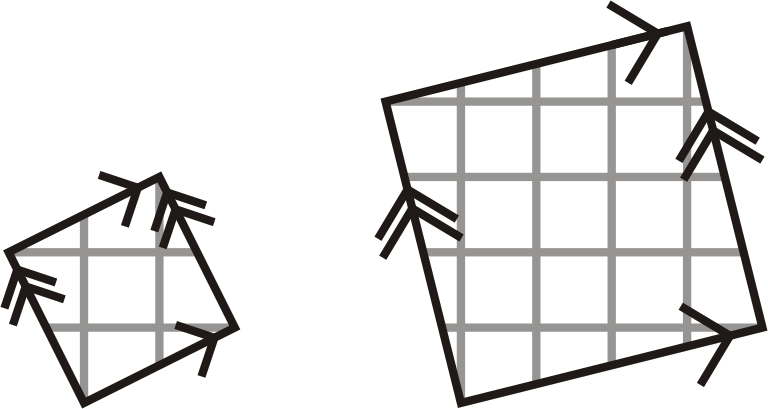}
\end{center}
\caption{Toroidal polyhedra \label{toroidalmaps}}
\end{figure}

It is well known that the toroidal map
$\{4, 4\}_{\bar{u}}$ is regular whenever $\bar{u} = (a, 0)$
or $\bar{u} = (a, a)$ with the vectors taken with
respect to the basis $\{e, e R\}$ where $e$ denotes a vector with the size and
direction of an edge of the tessellation.
Moreover, the toroidal map $\{4, 4\}_{\bar{u}}$ is
chiral whenever it is not regular (see \cite{CoM}, \cite[Section 1D]{ARP}). Note that
$\{4, 4\}_{(1, 0)}$ and $\{4,4\}_{(1,1)}$ fail to be
strongly flag-connected, and thus, they are not abstract polyhedra.

%   Let $\K$ be a $d$-polytope and $\Po$ be a regular $d$-polytope with
%automorphism group $\Gamma(\Po) = \langle \rho_0, \dots, \rho_{d-1} \rangle$.
%We say that $\K$ admits a {\em flag action} from $\Po$ if there is a group homomorphism
%from $\Gamma(\Po)$ to the monodromy group of $\K$ mapping $\rho_i$ to $r_i$. It is not
%hard to see that $\K \cong \Po/N$ where $N$ is the stabilizer in $\Gamma(\Po)$ of
%a flag of $\K$. The {\em quotient} $\Po/N$ can be thought of as the combinatorial
%structure resulting from identifying the flag $\Phi w$ of $\Po$ with all
%flags $\Phi n w$, for $\Phi$ the base flag of $\Po$, $n \in N$ and $w$ in the
%monodromy group of $\Po$.
%We refer to \cite{hartley} and \cite[Section 2D]{ARP} for further details.

%   We note that whenever $\K$ is chiral, if the action
%of a word $w$ in the generators $\rho_i$ of $\Gamma(\Po)$ stabilizes a flag $\Phi$ of $\K$
%then $w$ must preserve the two orbits of flags under $\Gamma(\K)$. Since the action of
%each generator $\rho_i$ interchanges the orbits, it follows that all elements of
%$\Gamma(\Po)$ fixing $\Phi$ have even length, and thus, $N \subseteq \Gamma^+(\Po)$.

\section{GPR graphs}
\label{SectionGraphs}

   CPR (C-group permutation representation) graphs are introduced in \cite{pre}
as graphs encoding all the information of
the automorphism group of a regular polytope. This concept is extended in
\cite{pewe} to GPR (general permutation representation) graphs encoding all
information of the rotation subgroup of orientably regular or chiral polytopes.
In this section we review the definition and some basic results about GPR graphs of orientably regular or chiral polytopes.

   Let $\K$ be a finite orientably regular or chiral polytope with $\Gamma^+(\K) = \langle \sigma_1,
\dots, \sigma_{d-1} \rangle$ and let $\pi$ be an embedding of $\Gamma^+(\K)$ into
a symmetric group $S_t$. The {\em GPR graph of $\K$ determined by $\pi$} is the
directed multigraph (parallel arrows are allowed) with vertex set $\{1, \dots,
t\}$ and an arrow labeled $k \in \{1, \dots, d-1\}$ from $u$ to $v$ if and
only if $v = u(\sigma_k \pi)$. In most cases GPR graphs are Schreier's coset diagrams of $\Gamma(\K)$ as
defined in \cite[Section 3.7]{CoM}; however, we allow GPR graphs to be disconnected.

   We shall denote arrows labeled $k$ as {\em $k$-arrows}.
For simplicity we omit all loops, and therefore, fixed points by the image
under $\pi$ of $\sigma_k$ are determined by the vertices contained in no $k$-arrow.
Note that, for every $k \in \{1, \dots, d-1\}$, the $k$-arrows
form directed cycles.
When convenient we shall refer to the connected components induced by arrows
labeled $k$ and by arrows labeled $j, j+1, \dots, k$ as {\em $k$-components} and
{\em $(j, k)$-components} respectively.

   Whenever the embedding $\pi$ plays
no role or it is clear from the context we shall refer to the GPR graph of $\K$
determined by $\pi$ simply by a GPR graph of $\K$.

   The group $\Gamma^+(\K)$ acts naturally on the vertex set of the GPR graph of
$\K$ determined by $\pi$. This action is defined by $v \sigma_k := v (\sigma_k \pi)$
and, since $\pi$ is an embedding, the action is faithful. Consequently, any GPR graph
of a regular or chiral polytope $\K$ completely determines the rotation subgroup
of $\K$.

   Whenever $\K$ is orientably regular or chiral, with base flag $\Phi$, we can
consider the natural action of $\G^+(\calK)$ on the flags in the same orbit as $\Phi$.
In other words, $\pi$ may be understood as the natural embedding of $\G^+(\calK)$ into the symmetric
group on this orbit. Then we
say that the GPR graph of $\K$ determined by $\pi$ is the {\em Cayley GPR
graph} of $\K$ and denote it by $Cay(\K)$. Note that $Cay(\K)$ is the (colored)
Cayley graph of $\Gamma^+(\K)$ with generators $\sigma_1, \dots, \sigma_{d-1}$
in the sense of \cite{white}. The following proposition and corollary follow from the
regular action (transitive action with trivial point stabilizers)
of $\Gamma^+(\K)$ on one of the two induced orbits of flags of $\K$, or alternatively,
from the regular action of a group on any of its Cayley graphs.

\begin{proposition}\label{uniquecayley}
   Let $G$ be the Cayley GPR graph of an orientably regular or
chiral $d$-polytope $\K$, and let $u$ and $v$ be vertices of $G$. Then there exists a
unique element $\alpha \in \Gamma(\K)$ such that $u \alpha = v$.
\end{proposition}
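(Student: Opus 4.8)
The plan is to show that the acting group $\Gamma^+(\K)$ operates \emph{regularly} (that is, simply transitively) on the vertex set of $Cay(\K)$; existence of $\alpha$ is then just transitivity and uniqueness is just freeness of the action. First I would recall that, by the construction of the Cayley GPR graph, the vertices of $Cay(\K)$ are exactly the flags in the orbit $\oo$ of the base flag $\Phi$ under $\Gamma^+(\K)$, and that the action is the natural one, $v\s_k = v(\s_k\pi)$. Since $\oo$ is a single orbit, every vertex has the form $\Phi\alpha$ for some element of the acting group. Writing $u = \Phi\alpha$ and $v = \Phi\beta$, the element $\alpha^{-1}\beta$ satisfies $u(\alpha^{-1}\beta) = v$, which settles existence.

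For uniqueness I would argue that the stabilizer of any vertex is trivial. Because the action is transitive, all stabilizers are conjugate, so it suffices to show that an automorphism $\gamma$ fixing $\Phi$ is the identity. Here I would invoke the compatibility relation (\ref{monaut}) between automorphisms and the monodromy permutations: applied with the single generator $r_i$ it gives $(\Phi\gamma)^i = (\Phi^i)\gamma$, so from $\Phi\gamma = \Phi$ we obtain $(\Phi^i)\gamma = \Phi^i$ for every $i$, i.e. $\gamma$ fixes every flag adjacent to $\Phi$. Iterating along a chain of adjacent flags and using strong flag-connectivity, $\gamma$ fixes every flag of $\K$, hence every face (each face lies in some flag), so $\gamma = \eps$. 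Consequently, if $u\alpha = u\beta$ then $\alpha\beta^{-1}$ stabilizes $u$ and is therefore the identity, giving $\alpha = \beta$.

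The only real content is this freeness step; everything else is bookkeeping about orbits. The hard part is the passage from ``$\gamma$ fixes $\Phi$'' to ``$\gamma$ fixes every flag,'' which is precisely where strong flag-connectivity and (\ref{monaut}) do the work, since a priori an order-preserving bijection could permute flags nontrivially while fixing one of them. Equivalently, one can observe that $Cay(\K)$ is by definition the Cayley graph of $\Gamma^+(\K)$ on the generators $\s_1, \dots, \s_{d-1}$, and a group always acts regularly on its own Cayley graph; this yields the same conclusion once one knows the vertex set is in bijection with $\Gamma^+(\K)$, which is again the freeness just established. For chiral $\K$ one has $\Gamma^+(\K) = \Gamma(\K)$, so the element $\alpha$ produced lies in $\Gamma(\K)$ exactly as stated; in the orientably regular case it is the rotation subgroup that acts on $\oo$, and the argument applies verbatim.
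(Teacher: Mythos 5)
Your proof is correct and takes essentially the same route as the paper: the paper derives the proposition directly from the regular (simply transitive) action of $\Gamma^+(\K)$ on the flag orbit $\oo$ of the base flag, or alternatively from the regular action of a group on its own Cayley graph, which is exactly what you establish. The only difference is one of detail, in that you spell out the freeness step (via (\ref{monaut}) and flag-connectivity) that the paper treats as a known fact.
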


\begin{corollary}\label{idcayley}
   Let $G$ be the Cayley GPR graph of an orientably regular or
chiral $d$-polytope $\K$. Then $\varepsilon$ is the only element in $\Gamma(\K)$
whose action on the vertices of $G$ has fixed points.
\end{corollary}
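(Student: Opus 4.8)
The plan is to deduce this directly from the uniqueness half of Proposition~\ref{uniquecayley}, since the two statements are really just two phrasings of the fact that $\Gamma(\K)$ acts regularly (freely and transitively) on the relevant orbit of flags. First I would suppose that $\alpha \in \Gamma(\K)$ has a fixed point in its action on the vertices of $G$; that is, there is some vertex $u$ of $G$ with $u\alpha = u$. The goal is then to conclude $\alpha = \varepsilon$.

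The key step is to observe that the identity $\varepsilon$ also fixes $u$, since $u\varepsilon = u$. Thus both $\alpha$ and $\varepsilon$ are elements of $\Gamma(\K)$ mapping the vertex $u$ to the vertex $u$. Applying Proposition~\ref{uniquecayley} with the choice $v = u$, there is a \emph{unique} element of $\Gamma(\K)$ sending $u$ to $u$, and hence $\alpha = \varepsilon$. This shows that any element with a fixed point must be the identity, which is exactly the claim (the converse, that $\varepsilon$ does fix vertices, being trivial).

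I do not expect any genuine obstacle here: the entire content has already been established in Proposition~\ref{uniquecayley}, and the corollary merely extracts the freeness of the action from the combined existence-and-uniqueness statement there. The only thing to be careful about is bookkeeping with the (right) action $v\sigma_k := v(\sigma_k\pi)$ and the fact that $\pi$ is an embedding, so that equality of permutations on the vertex set of $G$ is the same as equality in $\Gamma(\K)$; this is precisely why the action is faithful and why the uniqueness in the proposition transfers to a statement about group elements rather than merely permutations.
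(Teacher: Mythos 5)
Your proof is correct and matches the paper's approach: the paper states that the corollary follows from Proposition~\ref{uniquecayley} (equivalently, from the regular action of $\Gamma^+(\K)$ on a flag orbit), and your argument---applying the uniqueness in that proposition to the pair $u, u$ to force $\alpha = \varepsilon$---is exactly the intended deduction.
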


There is a standard way of representing the Cayley GPR graph of an orientably regular or chiral polytope $\K$.
Consider as vertices the flags in the orbit of the base flag under $\Gamma(\K)$, and add arrows labeled $k$ from each
flag $\Phi$ to flag $(\Phi^{k-1})^{k}$. That is, the arrows labeled $k$ correspond to the action of the monodromy element
$s_k$ as defined in Section \ref{SectionDefinitions}. By Proposition \ref{monoautid2}, the graph obtained this way
is isomorphic to the Cayley GPR graph of $\K$.

	Given an embedding $\pi: \G^+(\calK) \to S_t$, there is a natural embedding $\pi^n: \G^+(\calK) \to S_t \times \cdots
\times S_t$ into the direct product of $n$ copies of $S_t$, defined by $\s_k \pi^n = (\s_k \pi, \ldots, \s_k \pi)$.
The following proposition follows immediately.

\begin{proposition}\label{isoremark}
	Let $G_1, \ldots, G_m$ be isomorphic copies of a GPR graph of the polytope $\calK$.
Then the disjoint union of all the graphs $G_j$ is a GPR graph of $\calK$.
\end{proposition}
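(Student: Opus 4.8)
The plan is to realize the disjoint union directly as the GPR graph attached to the diagonal embedding $\pi^m$ introduced just above. First I would recall the mechanism: a GPR graph of $\calK$ is, by definition, the graph on $\{1,\dots,t\}$ determined by some embedding $\pi\colon \G^+(\calK)\to S_t$, with a $k$-arrow from $u$ to $v$ precisely when $v=u(\s_k\pi)$. Since $G_1,\dots,G_m$ are isomorphic copies of one such graph $G$, I may relabel vertices so that $G_j$ is carried by the block $V_j:=\{(j-1)t+1,\dots,jt\}$ and is, within that block, the graph determined by a single embedding $\pi\colon\G^+(\calK)\to S_t$ acting on $V_j$. (An isomorphic copy of a GPR graph is again a GPR graph, determined by the embedding conjugated by the relabeling bijection, so this costs nothing.) The disjoint union then has vertex set $\{1,\dots,mt\}$ partitioned into the $m$ blocks $V_1,\dots,V_m$.

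Next I would form the diagonal embedding $\pi^m\colon\G^+(\calK)\to S_t\times\cdots\times S_t\le S_{mt}$ given by $\s_k\pi^m=(\s_k\pi,\dots,\s_k\pi)$, viewing the $j$-th factor as acting on the block $V_j$. Two things must be checked. First, $\pi^m$ is an embedding: if $w\pi^m=\eps$ then each coordinate satisfies $w\pi=\eps$, whence $w=\eps$ because $\pi$ is injective, so $\pi^m$ is injective as required of a GPR embedding. Second, I would identify the graph determined by $\pi^m$ with $\bigsqcup_j G_j$. Because the action is diagonal, each $\s_k\pi^m$ preserves every block $V_j$ and restricts on $V_j$ to exactly $\s_k\pi$; hence a $k$-arrow runs from $u\in V_j$ to $v$ iff $v\in V_j$ and $v=u(\s_k\pi)$. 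Thus there are no arrows between distinct blocks, and the induced subgraph on $V_j$ is precisely $G_j$. This is the definition of the disjoint union, so the graph determined by $\pi^m$ equals $\bigsqcup_j G_j$, which is therefore a GPR graph of $\calK$.

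The argument is essentially immediate, as the preceding remark advertises, so I do not expect a serious obstacle. The only point requiring a moment of care is the reduction to a common embedding $\pi$ in the first paragraph: one must observe that relabeling vertices of a GPR graph conjugates its defining embedding by a permutation, and conjugation is an automorphism of $S_t$, so injectivity (hence the GPR property) is preserved. After that, verifying that the diagonal action keeps the blocks separate and reproduces each $G_j$ on its block is a direct unwinding of definitions.
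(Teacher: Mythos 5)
Your proposal is correct and follows exactly the route the paper intends: the paper introduces the diagonal embedding $\pi^n$ immediately before the proposition and simply states that the result ``follows immediately,'' and your argument is precisely the unwinding of that remark (injectivity of the diagonal map plus the observation that it preserves blocks and restricts to $\pi$ on each). The only addition on your part is the careful reduction of the isomorphic copies to a common embedding via conjugation by the relabeling permutation, which is a harmless and correct elaboration of what the paper leaves implicit.
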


	If $G$ is a graph with vertex set $\{1, \ldots, t\}$ and arrows with labels in $\{1, \ldots, d-1\}$
such that no vertices are incident to two or more arrows with the same label, then there are naturally
induced permutations $\s_1, \ldots, \s_{d-1}$ and a group $\G = \langle \s_1, \ldots, \s_{d-1} \rangle$.
In order to determine when $G$ is the GPR graph of an orientably regular or chiral polytope $\calK$,
we need to check whether $\G$ satisfies the relations (\ref{sigmarelation}) and the intersection condition
(\ref{intersectprop}). Furthermore, we would like to determine whether $\calK$ is orientably regular or chiral.
In general, it is not obvious from looking at $G$ whether these properties hold, but the
following variation of \cite[Theorem 3.7]{higherranks} will be useful for our construction.

\begin{theorem}\label{cayleyintersection}
   Let $G$ be a graph with arrows labeled $1, \dots, d$ and let $G_1, \dots, G_s$ be
the $(1, d-1)$-components of $G$. Assume also that
\begin{itemize}
   \item[(a)] $G_1, \dots, G_s$
         are isomorphic to the Cayley GPR graph of a fixed chiral $d$-polytope $\K$,
   \item[(b)] for $k = 1, \dots, d-1$, the action of
         $(\sigma_k \cdots \sigma_{d})^2$ on
         the vertex set of $G$ is trivial, where $\sigma_i$ is the permutation
         determined by all arrows of label $i$,
   \item[(c)] $\langle \sigma_1, \dots, \sigma_{d-1} \rangle \cap \langle \sigma_{d}
         \rangle = \{\varepsilon\}$.
   \item[(d)] for every $k = 2, \dots, d-1$ there exists a $(1, d-1)$-component $G_{i_k}$
         and a $(k, d)$-component $D_k$ such that $G_{i_k} \cap D_k$ is a nonempty
         $(k, d-1)$-component.
\end{itemize}
   Then $G$ is a GPR graph of a chiral $(d+1)$-polytope $\calP$.
\end{theorem}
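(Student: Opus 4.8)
The plan is to build a chiral $(d+1)$-polytope $\calP$ by showing that the group $\G = \langle \s_1, \ldots, \s_d \rangle$ induced by the graph $G$ satisfies the two requirements from the characterization in Section~\ref{SectionDefinitions}: namely relations (\ref{sigmarelation}) and the intersection condition (\ref{intersectprop}), together with the fact that its facet subgroup $\langle \s_1, \ldots, \s_{d-1} \rangle$ is the automorphism group of a chiral polytope. Once both properties hold, the cited result of \cite{chiregas} guarantees that $\G$ is the rotation subgroup of a polytope that is orientably regular or chiral; the chirality of the facets then forces $\calP$ itself to be chiral. The hypotheses (a)--(d) are designed precisely to feed this machine.

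First I would use hypothesis (a) to identify the facet group. Since each $(1,d-1)$-component $G_i$ is isomorphic to $Cay(\K)$ for a fixed chiral $d$-polytope $\K$, the permutations $\s_1, \ldots, \s_{d-1}$ restricted to any one component realize $\G^+(\K)$ faithfully by Proposition~\ref{uniquecayley}; taking the disjoint union over all components, Proposition~\ref{isoremark} shows that $\langle \s_1, \ldots, \s_{d-1} \rangle$ acting on $G$ is again (isomorphic to) $\G^+(\K)$, the automorphism group of the chiral $d$-polytope $\K$. In particular $\langle \s_1, \ldots, \s_{d-1} \rangle$ already satisfies its own instance of the intersection condition and all the relations $(\s_i \cdots \s_j)^2 = \eps$ for $1 \le i < j \le d-1$. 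Because $\K$ is chiral with regular facets (its $(d-1)$-faces being orientably regular, as required in \cite[Proposition 9]{chiregas}), the facet subgroup $\langle \s_1, \ldots, \s_{d-1} \rangle$ meets the hypothesis of Lemma~\ref{intersecteor}.

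Next I would verify relation (\ref{sigmarelation}) for the full group on $d$ generators. The relations internal to the facet, $(\s_i \cdots \s_j)^2 = \eps$ with $j \le d-1$, are inherited from $\K$; the remaining relations are those involving $\s_d$, i.e.\ $(\s_k \cdots \s_d)^2 = \eps$ for $k = 1, \ldots, d-1$, and these are exactly what hypothesis (b) asserts as trivial actions on the vertex set of $G$. Since the action of $\G$ on $G$ is faithful by construction, triviality of the action is equivalent to the relation holding in $\G$, so all instances of (\ref{sigmarelation}) are now in hand. (Here one could also invoke Lemma~\ref{sigmastau} to reduce the $\s_d$-relations to the finite list (\ref{equa1})--(\ref{equa4}), but (b) supplies them directly.)

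The main obstacle will be the intersection condition (\ref{intersectprop}) for the enlarged group, and this is where hypotheses (c) and (d) do the real work. My plan is to apply Lemma~\ref{intersecteor} with $d$ replaced by $d+1$: it suffices to establish the single family of intersections
\[
\langle \s_1, \ldots, \s_{d-1} \rangle \cap \langle \s_k, \ldots, \s_d \rangle = \langle \s_k, \ldots, \s_{d-1} \rangle
\]
for $k = 2, \ldots, d$. The inclusion $\supseteq$ is automatic, so the content is $\subseteq$. The case $k = d$ is precisely hypothesis (c), which says $\langle \s_1, \ldots, \s_{d-1} \rangle \cap \langle \s_d \rangle = \{\eps\}$. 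For $2 \le k \le d-1$ I would argue graph-theoretically: an element lying in both subgroups must, on one hand, preserve each $(1,d-1)$-component (since it is a word in $\s_1, \ldots, \s_{d-1}$) and, on the other, preserve each $(k,d)$-component (being a word in $\s_k, \ldots, \s_d$); hypothesis (d) furnishes, for each such $k$, a $(1,d-1)$-component $G_{i_k}$ and a $(k,d)$-component $D_k$whose intersection is a nonempty $(k,d-1)$-component, and on this intersection the element acts as an element of $\langle \s_k, \ldots, \s_{d-1} \rangle$. Combining this with Corollary~\ref{idcayley} — which says no nontrivial automorphism fixes a vertex of a Cayley GPR graph — lets me transfer the local conclusion to the whole group and pin the common element down to $\langle \s_k, \ldots, \s_{d-1} \rangle$. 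With both (\ref{sigmarelation}) and (\ref{intersectprop}) verified and the facet group chiral, the result of \cite{chiregas} delivers the chiral $(d+1)$-polytope $\calP$ with $G$ as a GPR graph, completing the proof. I expect the reduction to the single intersection family via Lemma~\ref{intersecteor}, and the careful bookkeeping of which component a given word must stabilize, to be the delicate part; once the component-intersection picture from (d) is set up correctly, the rest follows from the faithfulness and fixed-point-freeness recorded in Proposition~\ref{uniquecayley} and Corollary~\ref{idcayley}.
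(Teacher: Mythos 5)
Your proposal is correct and follows essentially the same route as the paper's proof: reduce to the characterization from \cite{chiregas} via Lemma~\ref{intersecteor}, get the relations from (a), (b) and faithfulness, dispose of $k=d$ with (c), and for $2\le k\le d-1$ chase a vertex of $G_{i_k}\cap D_k$ and invoke Corollary~\ref{idcayley} to pin the common element into $\langle \s_k,\ldots,\s_{d-1}\rangle$. The only difference is expository compression; the key ideas and their order coincide with the paper's argument.
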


\begin{proof}
The proof is analogous to \cite[Theorem 3.7]{higherranks}, but due to its importance, we present a full proof here.
If we can show that the group $\langle \s_1, \ldots, \s_{d} \rangle$ induced by $G$ satisfies the relations (\ref{sigmarelation})
and the intersection condition (\ref{intersectprop}), then it will follow that $G$ is a GPR graph of
an orientably regular or chiral polytope $\calP$. Furthermore, since the $(1, d-1)$-components of $G$ are all isomorphic to
$Cay(\calK)$, and $\calK$ is chiral, it will follow that the facets of $\calP$ are
chiral, and so $\calP$ itself is chiral.

The relations (\ref{sigmarelation}) follow from $(a)$, $(b)$ and the faithful
action of $\langle \sigma_1, \dots, \sigma_{d-1} \rangle$ on each $G_i$. Since the components $G_i$ are all
isomorphic, Proposition~\ref{isoremark} implies that $\langle \s_1, \ldots, \s_{d-1} \rangle$ is isomorphic
to the group determined by $G_{i_1}$, which is $\G(\calK)$. Then by Lemma~\ref{intersecteor}, we only need to show
that for $2 \le k \le d$,
\[\langle \sigma_1, \dots,
\sigma_{d-1} \rangle \cap \langle \sigma_k, \dots, \sigma_{d} \rangle =
\langle \sigma_k, \dots, \sigma_{d-1} \rangle.\]
The case $k = d$ follows from $(c)$. Let $2 \le k \le d-1$,
$\alpha \in \langle \sigma_1, \dots, \sigma_{d-1} \rangle \cap \langle \sigma_k,
\dots, \sigma_{d} \rangle$, and $v$ a vertex of $G_{i_k} \cap D_k$. Since
$\alpha \in \langle \sigma_1, \dots, \sigma_{d-1} \rangle$, $v \alpha$
belongs to $G_{i_k}$. A similar argument shows that $v \alpha$ belongs to
$D_k$. By $(d)$ it follows that $v \alpha$
and $v$ belongs to the same $(k, d-1)$-component, namely $G_{i_k} \cap D_k$,
and therefore there exists $\beta \in \langle \sigma_k, \dots,
\sigma_{d-1} \rangle$ such that $v \alpha \beta = v$. Since $\alpha \beta \in
\langle \sigma_1, \dots, \sigma_{d-1} \rangle$ it follows from Corollary
\ref{idcayley} that
$\alpha = \beta^{-1} \in \langle \sigma_k, \dots,
\sigma_{d-1} \rangle$.
\end{proof}

   As we shall see, sometimes it is convenient to consider an alternative generating
set for the rotation subgroup
$\Gamma^+(\K) = \langle \sigma_1, \dots, \sigma_{d-1} \rangle$
of a chiral $d$-polytope $\K$. In Section
\ref{SectionResult} we shall make use of
\[\{\sigma_1, \dots, \sigma_{d-2}, \tau_{d-2, d-1}\}\]
as a generating set for $\Gamma^+(\K)$. In so doing, we consider GPR graphs with
arrows labeled $1, \dots, d-2$ corresponding to $\sigma_1, \dots, \sigma_{d-2}$, and
a matching corresponding to
$\tau_{d-2, d-1}$ consisting of (non-oriented) edges between $v$ and $v \tau_{d-2, d-1}$
for each vertex $v$. This will make it immediate to verify relation (\ref{sigmarelation})
for $i= d-2$, $j=d-1$.

To conclude this section we state Jordan's theorem on primitive permutation groups (see \cite[Theorem 13.9]{Wielandt}).

\begin{theorem}\label{t:jordan}
Let $\Lambda$ be a primitive permutation group on the set $\{1, \dots, n\}$. If $\Lambda$ contains a $p$-cycle
for some prime $p < n-2$ then $\Lambda$ is either $S_n$ or $A_n$.
\end{theorem}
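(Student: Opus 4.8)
The plan is to prove this classical fact through the theory of Jordan sets, arguing by induction on $n$. First I would record the structural consequence of the hypothesis. Let $\sigma \in \Lambda$ be the given $p$-cycle, let $\Delta$ be its support, so that $|\Delta| = p$ and $\sigma$ fixes the remaining $n - p$ points. Since $p < n - 2$ we have $n - p \ge 3$, so at least three points lie outside $\Delta$. The cyclic group $\langle \sigma \rangle$ fixes $\Omega \setminus \Delta$ pointwise and permutes $\Delta$ in a single cycle; thus $\Delta$ is a Jordan set for $\Lambda$ whose complement contains at least three points. This is the feature that drives the entire argument, and it is where the bound $p < n-2$ enters.

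The heart of the proof is to upgrade primitivity to multiple transitivity. I would first establish that a primitive group admitting such a Jordan set must be $2$-transitive: were $\Lambda$ merely primitive but not $2$-transitive, the nontrivial suborbits of a point stabilizer, transported by powers of $\sigma$ (using that $p$ is prime, so that every nonidentity power of $\sigma$ is again a $p$-cycle with the same support $\Delta$), would force a nontrivial block system and contradict primitivity. With $2$-transitivity secured, the fundamental theorem on Jordan sets propagates transitivity upward, since the pointwise stabilizer of $\Omega \setminus \Delta$ acts transitively on $\Delta$; this yields at least $(n-p+1)$-fold transitivity, hence at least $3$-transitivity because $n - p \ge 3$. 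I expect this escalation -- turning primitivity plus a single Jordan set into genuinely multiple transitivity -- to be the main obstacle, as it is precisely here that the primality of $p$ is used in an essential way.

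To finish I would induct on the degree. Since $\Lambda$ is at least $3$-transitive, any point stabilizer $\Lambda_\alpha$ is at least $2$-transitive, hence primitive, on the remaining $n-1$ points. Choosing $\alpha$ among the (at least three) fixed points of $\sigma$, the element $\sigma$ lies in $\Lambda_\alpha$ and is still a $p$-cycle there, now fixing $n - p - 1 \ge 2$ points. When $\sigma$ fixes at least four points, i.e.\ $n - p \ge 4$, we have $p \le (n-1) - 3 < (n-1) - 2$, so the hypotheses transfer verbatim to $\Lambda_\alpha$ and the induction gives $\Lambda_\alpha \supseteq A_{n-1}$; the borderline case in which $\sigma$ fixes exactly three points ($p = n-3$) I would treat directly using the high transitivity already obtained. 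A standard argument then shows that a $2$-transitive group whose point stabilizer contains the alternating group $A_{n-1}$ must itself contain $A_n$. Since $A_n$ is the unique subgroup of index $2$ in $\mathrm{Sym}(\Omega)$, this forces $\Lambda$ to equal either $A_n$ or $S_n$, completing the proof.
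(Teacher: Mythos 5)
First, a note on the comparison: the paper does not prove this statement at all --- it quotes it as Jordan's classical theorem with a citation to Wielandt's book (Theorem 13.9 there). So your proposal can only be measured against the classical proof. Your overall strategy (Jordan sets, escalation of primitivity to multiple transitivity, then induction on the degree) is indeed the classical route, and two of your steps are sound as stated: the inductive reduction when $n - p \geq 4$ (a fixed point $\alpha$ of $\sigma$ gives a primitive stabilizer $\Lambda_\alpha$ of degree $n-1$ still satisfying the hypotheses), and the final counting argument (a transitive group whose point stabilizer contains $A_{n-1}$ has order at least $n!/2$, hence contains $A_n$).

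The genuine gap is the borderline case $p = n-3$, which you dismiss with ``I would treat directly using the high transitivity already obtained.'' Your induction strips away one fixed point of $\sigma$ at a time, so \emph{every} instance of the theorem funnels down to exactly this case; it is not a boundary nuisance but the entire content of the theorem, and high transitivity alone cannot dispatch it. Concretely, $\mathrm{PGL}(2,8)$ is sharply $3$-transitive of degree $9$ and contains a $7$-cycle ($7$ prime, $7 = 9-2$), yet it does not contain $A_9$; this is why the hypothesis $p < n-2$ is sharp, and it shows that ``multiply transitive plus a prime cycle with few fixed points'' is not, by itself, a configuration that forces $A_n$. Distinguishing your base case ($4$-transitive with a $p$-cycle fixing $3$ points) from this false neighbor ($3$-transitive with a $p$-cycle fixing $2$ points) requires a genuine argument: classically one either manufactures a $3$-cycle (e.g.\ as a commutator of $\sigma$ with a suitable conjugate, then applies the $3$-cycle theorem), or exploits that $\langle \sigma \rangle$ is a full Sylow $p$-subgroup of $\Lambda$ when $n < 2p$ and runs a normalizer/fusion analysis. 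None of this is sketched. A secondary inaccuracy: the escalation to $(n-p+1)$-fold transitivity does not follow, as you write, ``since the pointwise stabilizer of $\Omega \setminus \Delta$ acts transitively on $\Delta$''; the classical escalation theorem requires this stabilizer to act \emph{primitively} on $\Delta$. (For instance, in $\mathrm{AGL}(2,3)$ acting on $9$ points, the complement of an affine line is a Jordan set whose complement has size $3$ and whose stabilizer acts transitively but imprimitively, yet $\mathrm{AGL}(2,3)$ is not $4$-transitive.) Primality rescues you here --- a transitive group of prime degree is automatically primitive --- but that is the precise point where primality enters the escalation, and it must be said explicitly rather than attributed vaguely to the behavior of powers of $\sigma$.
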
 

\section{Chiral extensions of chiral polytopes}
\label{SectionResult}
Let $\K$ be a chiral $d$-polytope with regular facets. In this section, we will provide a construction of a
(GPR graph of a) chiral $(d+1)$-polytope $\Po$ with facets isomorphic to $\K$.

To start the construction of a GPR graph $G_\Po$ of $\Po$, we take two disjoint copies $G_\K$ and $G_\K'$ of the Cayley GPR graph of $\K$ with standard generators $\sigma_1, \dots, \sigma_{d-1}$. Let us denote the vertices of $G_\K$ by $\{v_1, \dots, v_t\}$ and the vertices of $G_\K'$ by $\{v_1', \dots, v_t'\}$ in such a way that there is an arrow labelled $k$ from $v_i$ to $v_j$ if and only if there is an arrow labelled $k$ from $v_i'$ to $v_j'$. In other words, the labels of $G_\K'$ are obtained from those of $G_\K$ just by attaching a prime. The idea is to add a perfect matching between $G_\K$ and $G_\K'$, where the new edges correspond to the new generator $\tau_{d-1,d}$.

Let $l$ be the order of $\sigma_{d-1}$ and let $v_1$ and $v_1'$ be the vertices of $G_\K$ and $G_\K'$ (respectively) corresponding to the base flag of $\K$. We add an edge of the matching between $v_1$ and $v_1'$, and for $k \in \{1, \dots, l-1\}$ we also match $v_1 \sigma_{d-1}^{k}$ to $v_1' \sigma_{d-1}^{l-k}$ (see the left part of Figure \ref{GPRconstruction} where an example for $l =5$ is given; the gray dotted lines represent $\tau_{d-1,d}$ whereas the arrows represent $\sigma_{d-1}$).

For $k = 1, \dots, d-2$, let us define $E_k$ to be the $(k, d-1)$-component containing $v_1$. In other words $E_k = v_1\langle\sigma_k, \dots, \sigma_{d-1}\rangle$. Note that if a $(1, d-2)$-component $A$ of $G_\K$ intersects $E_k$, then it also intersects $E_j$ for all $j < k$. Let us define
\[\mathcal{C}_k = \{(1,d-2)-\mbox{components $A$} \mid \mbox{$A$ intersects $E_k$ but not $E_{k+1}$}\}.\]
Now, for each $A \in \mathcal{C}_k$, we pick a vertex $v_i \in A \cap E_k$ and match it to $v_i'$. The right part of
Figure~\ref{GPRconstruction} illustrates our strategy; the dashed ellipses represent the $(1, d-2)$-components of $G_\K$.

\begin{figure}
\begin{center}
\includegraphics[width=12cm, height=4.5cm]{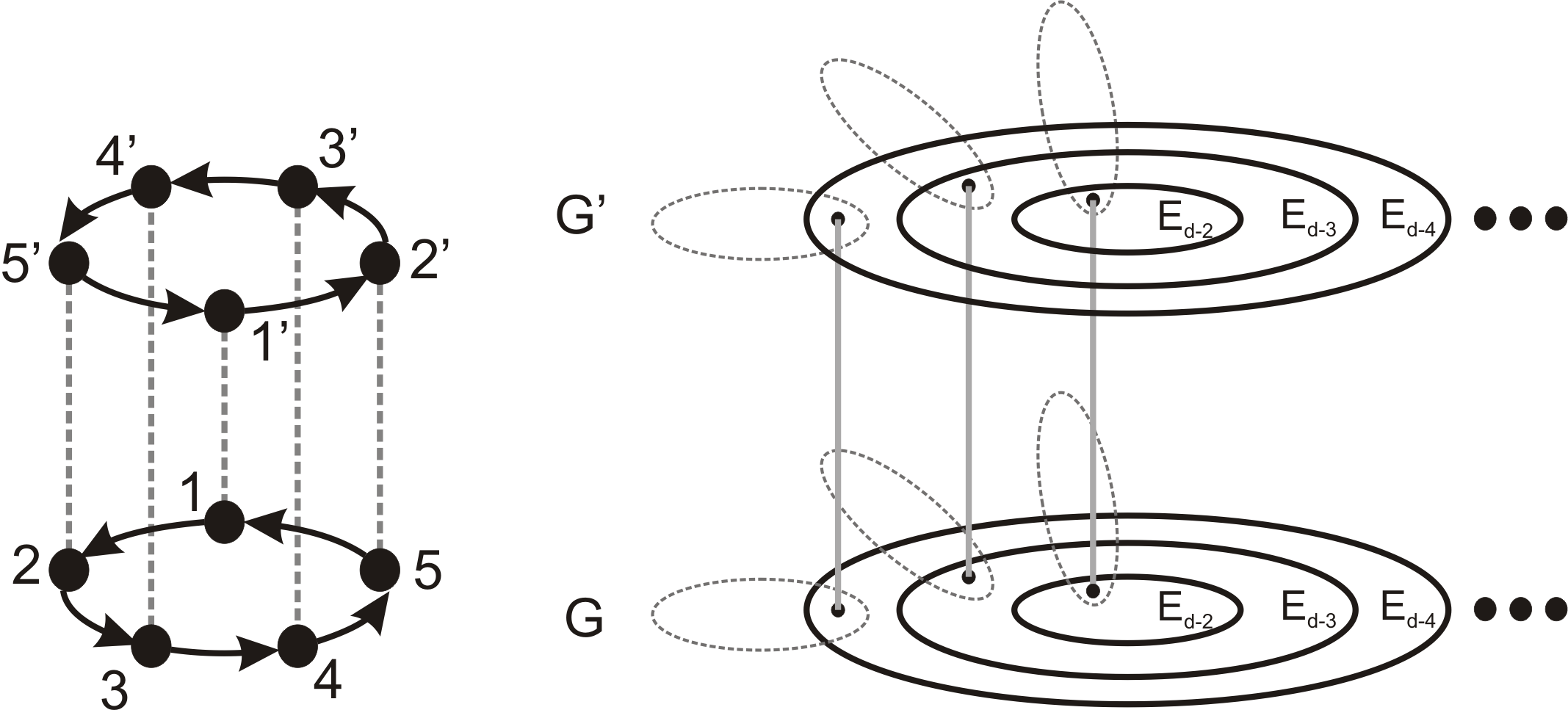}
\caption{Construction of a GPR graph of $\Po$}
\label{GPRconstruction}
\end{center}
\end{figure}

So far, we have defined the matching only on one vertex of each $(1,d-2)$-component of $G_\K$ and $G_\K'$. Furthermore,
each $(1,d-2)$-component $A$ has a vertex that has been matched, since $A$ either lies in one of the sets $\mathcal{C}_k$
or contains a single vertex of the form $v_1 \s_{d-1}^k$. Therefore, we have an induced
matching between the $(1,d-2)$-components of $G_\K$ and $G_\K'$. It remains to extend the matching to the remaining vertices. Note that once we have matched $v_i$, then the other vertices in the same $(1,d-2)$-component are of the form
$v_i \alpha$ for $\alpha \in \langle \s_1, \ldots, \s_{d-2} \rangle$, and so we only need to decide how we will match
such vertices. Now, if $\Q$ is the facet of $\K$ then $\alpha \in \G^+(\Q)$. Furthermore, $\Q$ is regular, so
$\G(\Q) = \langle \rho_0, \ldots, \rho_{d-2} \rangle$. Therefore, since $\alpha \in \G^+(\Q)$, it follows that
$\rho_{d-2} \alpha \rho_{d-2} \in \G^+(\Q)$. In fact, $\rho_{d-2} \alpha \rho_{d-2} = \ch{\alpha}$, where
$\ch{\alpha}$ is obtained from $\alpha$ by replacing each factor $\sigma_{d-2}$ by a factor $\sigma_{d-2}^{-1}$ and any factor
$\sigma_{d-3}$ by $\sigma_{d-3}\sigma_{d-2}^2$ in any word representing $\alpha$ in terms of $\sigma_1, \dots, \sigma_{d-2}$.
Now, we declare that whenever $v_i$ is matched to $v_j'$, then we must match $v_i \alpha$ to $v_j' \ch{\alpha}$.

We define the graph $G_\Po$ to be the disjoint union of $G_\K$ and $G_\K'$ together with the matching described above. The arrows of label $i$ induce a permutation $\eta_i$ on the disjoint union of the vertex sets of $G_\K$ and $G_\K'$, whereas the matching represents a permutation $\tau$ on the same set.

The next theorem shows that the graph just constructed describes a chiral $(d+1)$-polytope.

\begin{theorem}\label{MainProposition}
The graph $G_\Po$ defined above is a GPR graph of a chiral $(d+1)$-polytope with facets isomorphic to $\K$.
\end{theorem}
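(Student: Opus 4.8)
The natural strategy is to verify that the constructed graph $G_\Po$ satisfies the hypotheses $(a)$--$(d)$ of Theorem~\ref{cayleyintersection} with $d$ replaced by $d+1$, using the generating set in which $\eta_{d-1}$ corresponds to $\sigma_{d-1}$ and $\tau$ corresponds to the new generator $\tau_{d-1,d}$. Once these four conditions are checked, Theorem~\ref{cayleyintersection} immediately yields that $G_\Po$ is a GPR graph of a chiral $(d+1)$-polytope, and since the $(1,d-1)$-components are exactly $G_\K$ and $G_\K'$, the facets are isomorphic to $\K$, as desired. So the entire task reduces to confirming $(a)$ through $(d)$.

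I would dispatch the easy conditions first. Condition~$(a)$ is immediate: by construction $G_\K$ and $G_\K'$ are the two $(1,d-1)$-components and each is a copy of $Cay(\K)$. Condition~$(c)$, that $\langle \sigma_1,\dots,\sigma_{d-1}\rangle \cap \langle \tau \rangle = \{\eps\}$, should follow because $\tau$ is the matching that swaps the two components $G_\K$ and $G_\K'$ (it is an involution interchanging primed and unprimed vertices), while every nontrivial element of $\langle \sigma_1,\dots,\sigma_{d-1}\rangle$ preserves each component; hence the only common element fixes components, and an element fixing components in $\langle\tau\rangle$ must be trivial. Condition~$(d)$ asks, for each $k=2,\dots,d-1$ (here the range is up to $d$ in the $(d+1)$-polytope setting), for a $(1,d-1)$-component and a $(k,d)$-component whose intersection is a nonempty $(k,d-1)$-component; this is exactly what the careful choice of the matching on the sets $\mathcal{C}_k$ and $E_k$ is designed to guarantee, so I would trace through that the vertex $v_i$ chosen in $A\cap E_k$ for $A\in\mathcal{C}_k$ lies in a $(k,d)$-component that meets $G_\K$ in precisely a $(k,d-1)$-component.

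The main obstacle is condition~$(b)$: showing that $(\eta_k\cdots\eta_{d-1}\tau)^2$ acts trivially on all vertices for each $k$. By Lemma~\ref{sigmastau} (applied in rank $d+1$, with $\tau_{d-1,d}=\sigma_{d-1}\tau$), this is equivalent to checking $\tau_{d-1,d}^2=\eps$ together with the conjugation relations~(\ref{equa2})--(\ref{equa4}), i.e. that $\tau_{d-1,d}\,\sigma_j$ equals $\ch{\sigma_j}\,\tau_{d-1,d}$ for $j\in\{1,\dots,d-2\}$, where the enantiomorphic word is taken with respect to the $(d-1)$-faces. This is precisely where the two design choices enter: the matching of $v_1\sigma_{d-1}^k$ to $v_1'\sigma_{d-1}^{l-k}$ forces the relation involving $\sigma_{d-1}$, while the rule matching $v_i\alpha$ to $v_j'\ch{\alpha}$ (with $\ch{\phantom{x}}$ taken relative to $\rho_{d-2}$, using that the facet $\Q$ is regular) is exactly engineered so that pushing $\sigma_j$ across the matching $\tau$ replaces $\alpha$ by $\ch{\alpha}$ and thus produces $\ch{\sigma_j}$.

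To carry out $(b)$ in detail I would fix a vertex $v_i$ matched to $v_j'$ and compute the image of $v_i\alpha$ under $\tau_{d-1,d}\,\eta_j$ and under $\ch{\eta_j}\,\tau_{d-1,d}$, verifying they agree by using the matching rule together with the identity $\ch{\alpha\sigma_j}=\ch{\alpha}\,\ch{\sigma_j}$ for the enantiomorphic operation, and separately confirm that $\tau_{d-1,d}$ is an involution from the symmetry $k\leftrightarrow l-k$ in the definition of the matching. The one genuinely delicate point is consistency: the matching is defined componentwise off a single chosen representative per $(1,d-2)$-component, so I must check it is well-defined, i.e. that the formula $v_i\alpha\mapsto v_j'\ch{\alpha}$ does not depend on how $\alpha$ expresses a given vertex. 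This reduces to the fact that $\alpha\mapsto\ch{\alpha}$ is the automorphism $\rho_{d-2}(\cdot)\rho_{d-2}$ of $\G^+(\Q)$ (so it is a well-defined map on group elements, not just on words) and that $\Q$ is regular, which is exactly the hypothesis that $\K$ has regular facets; I expect this well-definedness and the relation~(\ref{equa4})/(\ref{equa3}) bookkeeping to be the most technical part of the verification.
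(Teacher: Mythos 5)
Your overall strategy is exactly the paper's: verify hypotheses (a)--(d) of Theorem~\ref{cayleyintersection}, using Lemma~\ref{sigmastau} for (b), and exploit the two design features of the matching (the rule on the $\sigma_{d-1}$-orbit of $v_1$ and the rule $v_i\alpha \mapsto v_j'\ch{\alpha}$). Your handling of (a), of the relations in (b), and of the well-definedness of $\alpha \mapsto \ch{\alpha}$ via regularity of the facet $\Q$ all match the paper. However, your verification of condition (c) has a genuine gap, caused by conflating the matching $\tau$ with the new abstract rotation. In the construction, the matching represents the half-turn $\tau_{d-1,d} = \sigma_{d-1}\sigma_d$, so the permutation playing the role of $\sigma_d$ in Theorem~\ref{cayleyintersection} is $\eta_d := \eta_{d-1}^{-1}\tau$, \emph{not} $\tau$. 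Condition (c) therefore asks that $\langle \eta_1, \dots, \eta_{d-1} \rangle \cap \langle \eta_d \rangle = \{\eps\}$, and your argument---that $\tau$ is an involution swapping $G_\K$ and $G_\K'$, so the only component-preserving element of $\langle \tau \rangle$ is trivial---does not apply to $\langle \eta_d \rangle$: the element $\eta_d$ is far from being an involution (in the rank-$4$ examples of Section~\ref{SectionExamples} it has order $2(n-2)$), and all of its even powers preserve each of $G_\K$ and $G_\K'$, so the parity argument says nothing about whether a nontrivial even power of $\eta_d$ lies in $\langle \eta_1, \dots, \eta_{d-1}\rangle$. The same conflation shows up in your (b), where the relevant relations are $(\eta_k \cdots \eta_{d-2}\,\tau)^2 = \eps$ (since $\eta_{d-1}\eta_d = \tau$), i.e., Lemma~\ref{sigmastau}'s relations involve only $\tau^2$ and the $\eta_j$ with $j \le d-2$; there is no relation there ``involving $\sigma_{d-1}$'' for the special matching rule to force.

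The missing step is precisely where that special rule, matching $v_1\sigma_{d-1}^{k}$ to $v_1'\sigma_{d-1}^{l-k}$, actually earns its keep: it gives $v_1 \eta_d^2 = v_1(\eta_{d-1}^{-1}\tau)^2 = v_1$, hence every even power of $\eta_d$ fixes $v_1$, while every element of $\langle \eta_d \rangle$ preserving $G_\K$ must be an even power. Since $G_\K$ is a \emph{Cayley} GPR graph, Corollary~\ref{idcayley} says that the only element of $\langle \eta_1, \dots, \eta_{d-1} \rangle$ with a fixed point is $\eps$; combining these facts yields (c). With this repair, and with your sketch of (d) fleshed out as in the paper (if $\alpha \in \langle \sigma_k, \dots, \sigma_{d-1} \rangle$ and $v_i$ is the matched representative of the $(1,d-2)$-component of $v_1\alpha$, then $v_i = v_1\alpha\beta$ with $\beta \in \langle \sigma_k, \dots, \sigma_{d-2} \rangle$ by the intersection condition in $\K$, so $v_1\alpha$ is matched to $v_1'\alpha\beta\,\ch{\beta^{-1}}$, which lies in the $(k,d-1)$-component of $v_1'$), your outline becomes the paper's proof.
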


\begin{proof}
Let $\tau$ be the permutation induced by the matching between $G_\K$ and $G_\K'$ and, for $i \in \{1, \dots, d-1\}$, let $\eta_i$ be the permutation corresponding to arrows labelled $i$. By Proposition \ref{isoremark} we know that $\langle \eta_1, \dots, \eta_{d-1} \rangle \cong \langle \sigma_1, \dots, \sigma_{d-1} \rangle$.

To prove the claim, we will use Theorem~\ref{cayleyintersection}. Part (a) is true by construction. For part (b), we will
use Lemma \ref{sigmastau} and show that the permutations $\eta_i$ satisfy relations (\ref{equa1}), (\ref{equa2}), (\ref{equa3}) and
(\ref{equa4}). Relation (\ref{equa1}) is satisfied by construction of $\tau$, since the edges matching $G_\K$ with $G_\K'$ are
undirected. On the other hand, for $i \in \{1, \dots, d-2\}$, the action of $\tau \eta_i \tau$ on every vertex of $G_\Po$ is
precisely the one required in Lemma \ref{sigmastau}. (Observe that the subindices are shifted by one.)

For part (c), we note that
\begin{eqnarray*}
v_1 \eta_d^2 &=& v_1 (\eta_{d-1}^{-1} \tau)^2\\ &=& v_1 \eta_{d-1}^{-1} \tau \eta_{d-1}^{-1} \tau\\
             &=& v_1 \tau \eta_{d-1} \eta_{d-1}^{-1} \tau = v_1.
\end{eqnarray*}
This implies that every power of $\eta_d$ preserving $G_\K$ must fix $v_1$. Corollary \ref{idcayley} now proves that $\langle \eta_1, \dots, \eta_{d-1} \rangle \cap \langle \eta_{d} \rangle = \{\eps\}$.

To conclude the proof, it suffices to prove that, for every $k \in \{2, \dots, d-1\}$, the intersection of $G_\K$ with the $(k,d)$-component containing $v_1$ is $E_k$ (the $(k,d-1)$-component of $v_1$). In order to do this we show that, for every $k$, the vertices in the $(k,d-1)$-component of $v_1$ are matched to the vertices in the $(k,d-1)$-component of $v_1'$. In other words, we want to show that if $\alpha \in \langle \sigma_k, \dots, \sigma_{d-1} \rangle$, then there is an $\alpha' \in \langle \sigma_k, \dots, \sigma_{d-1} \rangle$ such that $v_1 \alpha$ is matched to $v_1' \alpha'$.

Let $k \in \{2, \dots, d-1\}$, let $\alpha \in \langle \sigma_k, \dots, \sigma_{d-1} \rangle$ and let $A$ be the
$(1,d-2)$-component that contains $v_1 \alpha$. Then $A$ intersects $E_k$, and in the construction of the matching we picked some
$v_i \in A \cap E_k$ and matched it to $v_i'$. (Indeed, $v_i$ may have been chosen in $A \cap E_j$ for some $j>k$, with no
consequences in the rest of the proof.) Now, since $v_i$ is in the $(1, d-2)$-component of $v_1 \alpha$ and the $(k,d-1)$-component
of $v_1$ (and therefore of $v_1 \alpha$), it follows that $v_i = v_1 \alpha \beta$ for some $\beta \in
\langle \s_1, \ldots, \s_{d-2} \rangle \cap \langle \s_k, \ldots, \s_{d-1} \rangle$ (and similarly,
$v_i' = v_1' \alpha \beta$). Since $\K$ is a polytope,
it follows that $\beta \in \langle \s_k, \ldots, \s_{d-2} \rangle$. In the last step of the construction we matched
$v_1\alpha = v_i \beta^{-1}$ with $v_i' \overline{\beta^{-1}} = v_1' \alpha \beta \overline{\beta^{-1}}$. But $\overline{\beta^{-1}}\in \langle \sigma_k, \dots, \sigma_{d-1} \rangle$ implying that $\alpha' = \alpha \beta \overline{\beta^{-1}} \in \langle \sigma_k, \dots, \sigma_{d-1} \rangle$, so our matching has the desired property.
\end{proof}

Theorem \ref{MainProposition} does not require the polytope $\K$ to be finite. However, in \cite{chiregas3} another construction was given of an infinite chiral $(d+1)$-polytope whose facets are any given chiral $d$-polytope with regular facets. The main contribution of Theorem \ref{MainProposition} is to add a finiteness property in the extension when $\K$ is finite, thus implying Theorem \ref{MainTheorem}. 

\section{Extending the toroidal maps $\{4, 4\}_{(b, c)}$}
\label{SectionExamples}
Here we will illustrate the construction of Theorem~\ref{MainProposition}
by extending the toroidal maps $\{4, 4\}_{(b, c)}$ to finite chiral
$4$-polytopes and examining the structure of the resulting automorphism groups.

Let $\calK := \{4, 4\}_{(b, c)}$,
and let $n = b^2 + c^2$ be prime (from which it follows that $n \equiv 1$ (mod 4)). We shall call {\em translations} the automorphisms generated by those mapping the base flag $\Phi$ to $\Phi^{1210}$ and $\Phi^{1012}$. These automorphisms extend naturally to the usual translations of the Euclidean tessellation $\{4,4\}$.

Note that since $n$ is prime,
and in particular $b$ and $c$ are coprime, it follows that we can reach every square using only horizontal translations.
Writing $X$ for translation to the right by one square and $Y$ for translation up by one square, there is
some $t$ such that $2 \leq t \leq n-2$ and $X^t = Y^{\pm 1}$. Furthermore, by picking $\bar{\K}$ instead of $\K$, the directions of the translations $X^t$ and $X^{n-t}$ are interchanged (see Figure \ref{flaglabels}), and we may choose $t$ such that it is either divisible by $4$ or congruent to $3$ mod $4$ (we will see later why this is useful).
In any case, if $X^t = Y^{\pm 1}$, then it also follows that $Y^t = X^{\mp 1}$ and so $X^{t^2} = Y^{\pm t} = X^{-1}$.
Since the order of $X$ is $n$, we see that $t^2 \equiv -1$ (mod $n$).

Let $G = Cay(\calK)$ with respect to the flag orbit containing $\Phi$.
Since $\calK$ has $n$ square faces, it has $8n$ flags, and $G$ has $4n$ vertices, corresponding to the {\em even
flags}, which are those that lie in the orbit $\Phi \Gamma(\K)$. We will number the flags of $G$ as follows. We label $\Phi$ as $1$, $\Phi^{10}$ as 2, $\Phi^{1010}$ as 3 and $\Phi^{01}$ as 4. Now, whenever we move right one square, we simply add $4$ to
all of the flag labels, setting by ``right'' the direction that takes $\Phi$ to $\Phi^{1210}$ (see Figure~\ref{flaglabels}).

\begin{figure}
\begin{center}
\includegraphics[width=11cm, height=5cm]{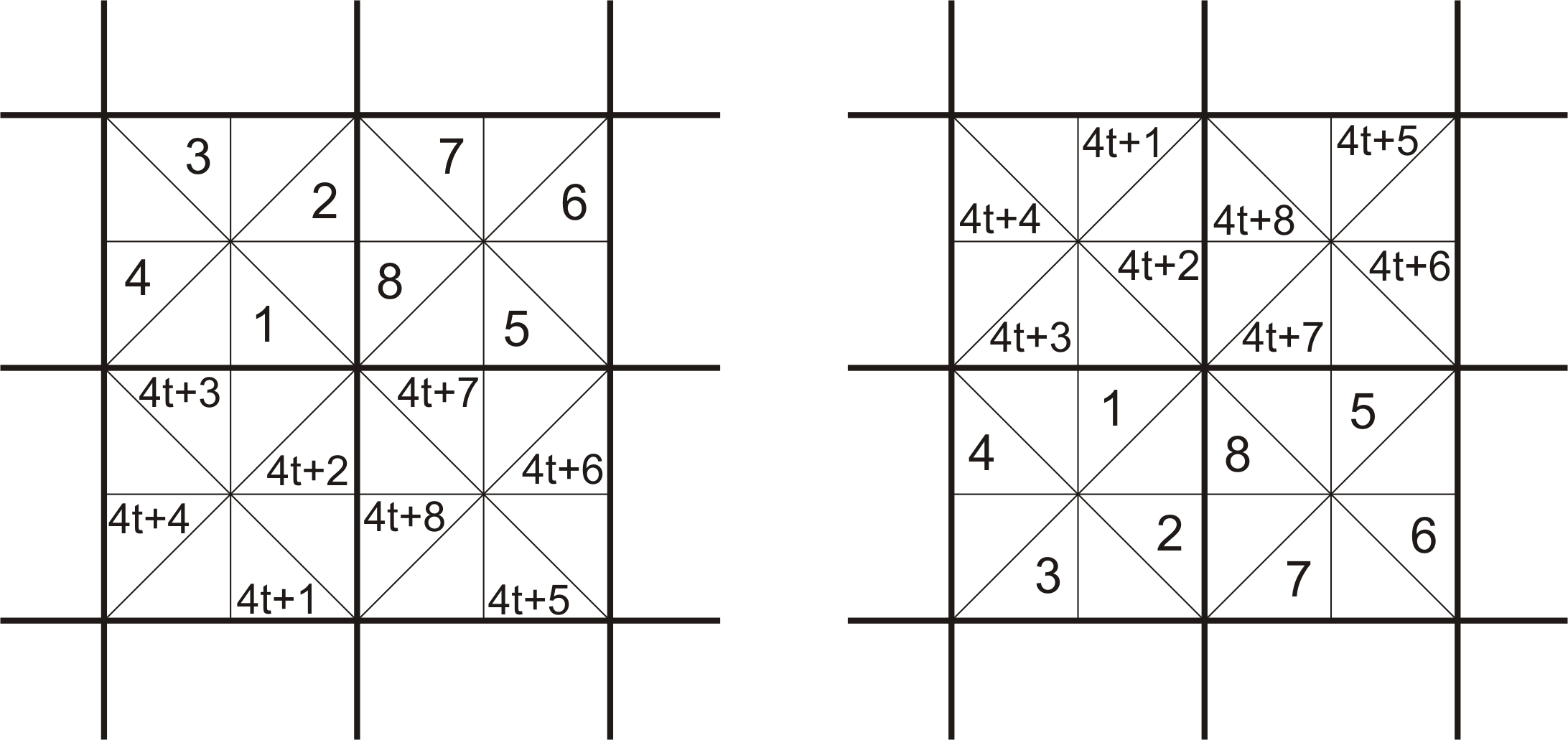}
\end{center}
\caption{Local numbering of the flags of $\{4, 4\}_{(b, c)}$ \label{flaglabels}}
\end{figure}

With our flags numbered in this way, Proposition \ref{monoautid2} allows us to assume that $\s_1$ and $\s_2$ act on the vertices of $G$ as follows:
\[ \s_1 = (1, 2, 3, 4)(5, 6, 7, 8) \cdots (4n-3, 4n-2, 4n-1, 4n) \]
% \[ \s_2 = (1, 4t+2, 4t+7, 8) \cdots (4i-3, 4i+4t-2, 4i+4t+3, 4i+4) \cdots (4n-3, 4t-2, 4t+3, 4). \]
\[ \s_2 = (1, \, 4t+2, \, 4t+7, \, 8) \cdots (4i+1, \, 4i+4t+2, \, 4i+4t+7, \, 4i+8) \cdots (4n-3, \, 4t-2, \, 4t+3, \, 4), \]
where in $\s_2$, we reduce each entry modulo $4n$. So we see that $\s_1$ permutes the flags in each square
in a cycle, and $\s_2$ permutes the flags around a given vertex in a cycle.
Defining $x := \s_2^{-1} \s_1$, we see that
\begin{align*}
x = (&1, \, 5, \, 9, \, \ldots, \, 4n-3) \\
 (&2, \, 4n - 4t + 2, \, 4n - 8t + 2, \, \ldots, \, 4t + 2) \\
 (&3, \, 4n-1, \, 4n-5, \, \ldots, \, 7) \\
 (&4, \, 4+4t, \, 4+8t, \, \ldots, \, 4n -4t + 4).
\end{align*}
Note that $x$ is not merely a translation to the right by one square; it acts that way on flags of the form
$4i+1$, but on other sets of flags it acts as a translation in different directions.
This is because we are working with the action of the even subgroup of the monodromy group (the group generated by the $s_i$'s) instead of the rotation
subgroup of the automorphism group.

%A presentation for $\G(\{4, 4\}_{(b, c)})$ can be found in \cite{chiregas2} as:
%\[ \langle \s_1, \s_2 \mid \s_1^4 = \s_2^4 = (\s_1 \s_2)^2 = (\s_1^{-1} \s_2)^b (\s_1 \s_2^{-1})^c = \eps \rangle. \]
%With our definitions of $\s_1$ and $\s_2$, we can easily check that $\langle \s_1, \s_2 \rangle$ acts transitively on the
%even flags and satisfies all of the above relations. It follows that
%$\langle \s_1, \s_2 \rangle \cong \G(\{4, 4\}_{(b, c)})$.

Now we proceed to the extension. In addition to the GPR graph $G$, we take another copy $G'$ of $Cay(\K)$ that is labeled in the
same way as $G$ except that we denote the vertices by $1', \ldots, (4n)'$. Furthermore, we extend $\s_1$ and
$\s_2$ so that they act on these new vertices in exactly the same way as the original vertices. That is, if
$\alpha \in \langle \s_1, \s_2 \rangle$, then
\[ (i') \alpha = (i \alpha)'. \]
We define $\tau := \tau_{2,3}$ as follows:
\begin{align*}
\tau = &\left(5, (4t+1)'\right)\left(6, (4t+4)'\right)\left(7, (4t+3)'\right)\left(8, (4t+2)'\right) \cdot\\
 &\left(4t+1, 5'\right)\left(4t+2, 8'\right)\left(4t+3, 7'\right)\left(4t+4, 6'\right) \cdot\\
& \prod_{0 \leq i \leq n-1, i \neq 1, t} \left(4i+1, (4i+1)'\right)\left(4i+2, (4i+4)'\right)\left(4i+3, (4i+3)'\right)
	\left(4i+4, (4i+2)'\right).
\end{align*}
Then $\tau$ induces a matching between the vertices of $G$ and those of $G'$, and by including those undirected edges we get
a graph with vertex set $V = \{i, i' \mid 1 \leq i \leq 4n\}$. We shall abuse notation and denote respectively by $\sigma_1$ and $\sigma_2$ the permutations of $V$ induced by the action of $\sigma_1$ and $\sigma_2$ on the vertices of $G$ and $G'$.

\begin{theorem}\label{t:4poly}
Let $\calK$, $\s_1$, $\s_2$ and $\tau$ be defined as in the preceding discussion, with $n = b^2 + c^2$ prime.
Let $\G = \langle \s_1, \s_2, \tau \rangle$. Then $\G$ is the automorphism group of a finite chiral
polytope with Schl\"afli symbol $\{4, 4, 2(n-2)\}$ and facets isomorphic to $\calK$.
\end{theorem}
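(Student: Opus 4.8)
The plan is to invoke Theorem~\ref{MainProposition} as the structural backbone: the group $\G = \langle \s_1, \s_2, \tau \rangle$ together with the graph on $V$ built from two copies of $Cay(\calK)$ and the matching $\tau$ is exactly an instance of the general construction, so $\G$ is automatically the automorphism group of a chiral $4$-polytope with facets isomorphic to $\calK = \{4,4\}_{(b,c)}$. Thus the genuinely new content of the theorem is twofold: first, that the explicit permutation $\tau$ written down in coordinates really \emph{is} the matching prescribed by the abstract construction of Section~\ref{SectionResult}; and second, the determination of the Schl\"afli symbol, namely that the last entry is $2(n-2)$. The first step I would carry out is therefore a bookkeeping verification: check that $\tau$ matches $v_1 = 1$ to $1'$, matches $v_1 \s_3^{\,k}$ (here $\s_3 = \s_{d-1}$ acting around the base vertex, of order $l = 4$) to $v_1' \s_3^{\,l-k}$, and that on each remaining $(1,d-2)$-component --- here the $(1,1)$-components, i.e.\ the $\s_1$-orbits corresponding to individual squares --- the prescription $v_i \alpha \mapsto v_j' \ch\alpha$ is respected. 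Because $d=3$ here, the relevant enantiomorphic twist $\ch\alpha$ only involves $\s_1 \mapsto \s_1^{-1}$, which is precisely why the nontrivial cycles of $\tau$ (on squares $1$ and $t$) send $4i+2 \leftrightarrow 4i+4$ while fixing the $4i+1, 4i+3$ pattern. This is routine but must be done carefully to confirm the displayed $\tau$ agrees with the construction.

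The substantive part is computing the Schl\"afli symbol $\{4,4,2(n-2)\}$. The first two entries $4,4$ are inherited directly from the facet $\calK = \{4,4\}_{(b,c)}$, so I would only need to remark that the facets are isomorphic to $\calK$ (already given by Theorem~\ref{MainProposition}) to justify them. For the last entry $p_3$, which equals the order of $\s_3 = \sigma_{d-1}\sigma_d = \tau_{d-2,d-1}\cdots$; concretely $p_3$ is the order of the new abstract rotation $\sigma_3$ in $\G$, or equivalently (by the standard relation between the vertex-figure and the rotation) the order of the product governing the $2$-faces around the edge. The plan is to realize $\sigma_3$ as an explicit permutation of $V$ --- it is $\eta_{d-1}^{-1}\tau = \s_2^{-1}\s_1 \cdot(\text{something}) $ up to the precise identification of generators, so I would write $\sigma_3$ as a product involving $x = \s_2^{-1}\s_1$ (whose cycle structure is displayed above) together with $\tau$, and then compute the order of the resulting permutation on the $8n$-element set $V$. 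The key arithmetic input that has been carefully set up is $t^2 \equiv -1 \pmod n$, together with the residue condition that $t$ is divisible by $4$ or congruent to $3 \bmod 4$; these control how the cycles of $x$ interleave with the transpositions of $\tau$ on the two special squares.

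The main obstacle I expect is precisely this order computation: showing that the permutation representing $\sigma_3$ has order exactly $2(n-2)$, neither more nor less. I anticipate the cycle structure of $\sigma_3$ splits according to whether a flag lies on one of the two ``defective'' squares (indices $1$ and $t$, where $\tau$ acts in the twisted way) or on a generic square (where $\tau$ acts by the untwisted pattern). On the generic squares the product $x$ and $\tau$ should compose into long cycles whose length is forced by the translation order $n$ and the relation $t^2 \equiv -1$, giving a factor of essentially $n-2$; the factor $2$ arises from the interchange between the $G$-copy and the $G'$-copy induced by $\tau$. The congruence condition on $t$ is what guarantees the two defective squares do not shorten the cycle in a way that drops the order below $2(n-2)$, and that no accidental coincidence raises it. I would verify this by tracking a single representative vertex (say $1 \mapsto 1' \mapsto \cdots$) under repeated application of $\sigma_3$, using the explicit cycles of $x$ and the transpositions of $\tau$, and counting how many steps return it to the start; the divisibility-by-$4$ or $3 \bmod 4$ hypothesis on $t$ should be exactly what makes this count come out to $2(n-2)$ uniformly. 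Confirming that the orders on all orbits coincide (so that the least common multiple is genuinely $2(n-2)$) and that this value is independent of the choice among the enantiomorphic forms is the delicate endgame of the proof.
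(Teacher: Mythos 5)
Your skeleton is the same as the paper's: check that the explicit $\tau$ instantiates the matching of Section~\ref{SectionResult} (it matches $1$ to $1'$, matches $1\s_2^k$ to $1'\s_2^{-k}$, and whenever it matches $i$ to $j'$ it matches $i\s_1^k$ to $j'\s_1^{-k}$), invoke Theorem~\ref{MainProposition} to obtain a finite chiral $4$-polytope with facets isomorphic to $\calK$, and then compute the order of the new rotation $\s_3=\s_2^{-1}\tau$ as a permutation of the $8n$ vertices. (Minor slip: the generator whose powers get matched around the base vertex is $\s_2$, of order $4$; it should not be called $\s_3$.) The gap is in the one substantive step, the order computation, where your guiding heuristics are wrong in ways that would derail the verification. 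First, the hypothesis that $t\equiv 0$ or $3\pmod 4$ is irrelevant to this theorem: the paper invokes it only later, in the determination of $\G(\Po)\cong(A_n\times A_n)\rtimes D_4$, where it guarantees that the cycles of $z=x\tau x\tau$ are odd so that $\Lambda\le A_n\times A_n$. What actually produces $2(n-2)$ is this: $\s_3$ swaps $G$ and $G'$, so its order is even; and $\s_3^{-2}$ acts on each of the eight classes of vertices as ``translation by $t-1$ squares'' (adding $4t-4$ to the label), except that it fixes two vertices in each class (for class $1$ these are $1$ and $4t+1$) and skips over them (adding $8t-8$ at the two preimages of the fixed vertices). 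Since $n$ is prime and $2\le t\le n-2$, adding $t-1$ modulo $n$ is an $n$-cycle on each class, so deleting the two fixed points leaves an $(n-2)$-cycle; hence $\s_3^2$ has order $n-2$ and $\s_3$ has order exactly $2(n-2)$.

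Second, your concrete verification plan would fail as stated. Vertex $1$ is precisely one of the fixed points of $\s_3^2$ (indeed $1\s_3=(4t+2)'$ and $(4t+2)'\s_3=1$), so tracking the orbit of $1$ gives length $2$, not $2(n-2)$. Relatedly, the $\s_3$-orbits do \emph{not} all have the same length: there are orbits of length $2$ (through the fixed points of $\s_3^2$) and orbits of length $2(n-2)$, and the order is their least common multiple, which happens to equal $2(n-2)$; so ``confirming that the orders on all orbits coincide'' is not the right check, and insisting on it would lead you to a false statement. With the cycle analysis above in place of these heuristics, the rest of your outline (the entries $4,4$ come from the facets; chirality, finiteness, and the isomorphism type of the facets come from Theorem~\ref{MainProposition}) matches the paper's proof.
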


\begin{proof}
The permutation $\tau$ induces a perfect matching between $G$ and $G'$. In particular, it matches $1$ to $1'$ and $1 \s_2^k$
to $1' \s_2^{-k}$, and whenever it matches $i$ to $j'$, it also matches $i \s_1^k$ to $j' \s_1^{-k}$. Therefore, the
matching induced by $\tau$ follows the ge\-ne\-ral construction in Section~\ref{SectionResult}, and Theorem~\ref{MainProposition}
shows that $\G$ is the automorphism group of a chiral polytope $\calP$ with facets isomorphic to $\calK$.

Next we find the order of $\s_3 = \s_2^{-1} \tau$. Since $\s_3$ sends vertices in $G$ to vertices in $G'$, the permutation
$\s_3$ must have even order. Consider the action of $\s_3^{-2}$ on vertices of the form $4i+1$. By construction,
$\s_3^{-2}$ fixes vertices $1$ and $4t+1$. A simple calculation then shows that $\s_3^{-2}$ sends $5$ to $8t-3$ and $5-4t$ to $4t-3$,
and that any other vertex $4i+1$ is sent to $4i+4t-3$. In other words, $\s_3^{-2}$ adds $4t-4$ to most vertices
of type $4i+1$, and it adds $8t-8$ to the vertices $5$ and $5-4t$, which is just enough to skip over the two fixed points.
Since there are $n$ vertices of the form $4i+1$, the permutation $\s_3^{-2}$ acts as an $(n-2)$-cycle on the vertices
of the form $4i+1$ other than $1$ and $4t+1$. Similar arguments can be used to show that $\s_3^{-2}$ acts
as an $(n-2)$-cycle on every other type of vertex (fixing two points), and so the order of $\s_3$ is $2(n-2)$.
Then, since the facets of $\calP$ are isomorphic to $\calK$, it follows that $\calP$ has Schl\"afli symbol
$\{4, 4, 2(n-2)\}$.
\end{proof}

We now determine the structure of $\G$.

For $k \in \{1, 2, 3, 4\}$, let us say that a vertex is in class $k$ (or in a {\em corner} of type $k$) if it has the form $4i + k$ for some $i$, and
in class $k'$ if it has the form $(4i + k)'$. Note that each class $k$ or $k'$ consists of all
of the vertices in $G$ or $G'$ (respectively) in the same orbit under the translation group of $\K$. The following lemma uses these classes
to factor $\G$ by a dihedral group $D_4$ with $8$ elements.

\begin{lemma}\label{l:corners}
Let $\Po$ be the $4$-polytope in Theorem \ref{t:4poly}. Then $\Gamma(\Po) \cong \Lambda \rtimes D_4$, where $\Lambda$ denotes the subgroup of $\G(\Po)$ stabilizing all classes of vertices of the GPR graph constructed above for $\Po$.
\end{lemma}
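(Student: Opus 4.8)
The plan is to realize $\Gamma(\Po)$ as an extension of a copy of $D_4$ by $\Lambda$ and then split it. First I would observe that each of the generating permutations $\sigma_1$, $\sigma_2$ and $\tau$ maps classes to classes: from the cycle structures written above, $\sigma_1$ and $\sigma_2$ each send a vertex of class $k$ to one of class $k+1$ (indices mod $4$), and they do the same on the primed classes since $(i')\alpha = (i\alpha)'$, while $\tau$ interchanges $G$ and $G'$ according to the rule read off from its three product blocks. Hence the partition of $V$ into the eight classes $1,2,3,4,1',2',3',4'$ is preserved by $\Gamma(\Po) = \langle \sigma_1, \sigma_2, \tau\rangle$, and the induced action on the classes defines a homomorphism $\phi \colon \Gamma(\Po) \to \mathrm{Sym}(\{1,2,3,4,1',2',3',4'\})$. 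By definition $\ker \phi = \Lambda$, the subgroup stabilizing every class.

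Next I would compute the image. The cycle structures give $\phi(\sigma_1) = \phi(\sigma_2) = a$ with $a = (1\,2\,3\,4)(1'\,2'\,3'\,4')$, and $\phi(\tau) = b$ with $b = (1\,1')(2\,4')(3\,3')(4\,2')$; consequently $\mathrm{Im}\,\phi = \langle a, b\rangle$. A direct check gives $a^4 = b^2 = \varepsilon$ and $bab = a^{-1}$, so $\langle a,b\rangle$ is a dihedral quotient. Since every power of $a$ preserves the set of unprimed classes whereas $b$ does not, we have $b \notin \langle a\rangle$, so $\langle a,b\rangle$ has order exactly $8$ and $\mathrm{Im}\,\phi \cong D_4$. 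This produces a short exact sequence $1 \to \Lambda \to \Gamma(\Po) \to D_4 \to 1$.

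To obtain the semidirect product it remains to split this sequence, that is, to exhibit a subgroup of $\Gamma(\Po)$ mapping isomorphically onto $\mathrm{Im}\,\phi$. The natural candidate is $H := \langle \sigma_1, \tau\rangle$. Here $\sigma_1^4 = \varepsilon$ (the first Schl\"afli entry of $\Po$ is $4$), $\tau^2 = \varepsilon$ by construction, and, crucially, $\tau \sigma_1 \tau = \sigma_1^{-1}$ holds already in $\Gamma(\Po)$: this is relation (\ref{equa2}) of Lemma~\ref{sigmastau}, which is satisfied because $\Po$ is a polytope. These relations show $H$ is a quotient of $D_4$, so $|H| \le 8$; on the other hand $\phi(H) = \langle a, b\rangle$ has order $8$, forcing $|H| = 8$ and $\phi|_H \colon H \to D_4$ to be an isomorphism. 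Therefore $\phi$ splits, $\Gamma(\Po) = \Lambda \rtimes H$, and $H \cong D_4$, giving $\Gamma(\Po) \cong \Lambda \rtimes D_4$ as claimed.

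The only genuinely delicate point is the splitting: the relation $bab = a^{-1}$ holds automatically at the level of class actions, but the argument needs the lifted relation $\tau\sigma_1\tau = \sigma_1^{-1}$ to hold in $\Gamma(\Po)$ itself, which is precisely what Lemma~\ref{sigmastau} supplies. Everything else, namely the verification that the generators permute classes and the evaluation of $\phi$ on $\sigma_1$, $\sigma_2$ and $\tau$, is a routine reading of the cycle structures already recorded for these permutations.
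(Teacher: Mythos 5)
Your proof is correct and takes essentially the same approach as the paper: both treat the eight classes as a block system, identify $\Lambda$ as the kernel of the induced action on blocks, and split the extension via the subgroup $\langle \sigma_1, \tau\rangle$, shown to be dihedral of order $8$ using $\sigma_1^4 = \tau^2 = \varepsilon$ and $\tau\sigma_1\tau = \sigma_1^{-1}$. The only cosmetic difference is that the paper reads the relation $\tau\sigma_1\tau = \sigma_1^{-1}$ directly off the action on vertices coming from the matching construction, whereas you source it from relation (\ref{equa2}) of Lemma~\ref{sigmastau}; both are valid.
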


\begin{proof}
The group $\G$ has a well-defined action on the vertex classes; namely, $\s_1$ and $\s_2$ send class $i$ to class $i+1$ and
class $i'$ to class $(i+1)'$, while $\tau$ sends class $1$ to class $1'$, class $2$ to class $4'$, class $3$ to
class $3'$, and class $4$ to class $2'$.
So these 8 classes provide a block system for $\G$. %, and any element of $\G$ can be written as $gh_1 \cdots h_8$,
%where $g$ permutes the blocks and each $h_i$ permutes the elements of one of the blocks.
Clearly, $\Lambda$ is a normal subgroup of $\G$ and $\G / \Lambda$ is the induced permutation group
on the blocks. Since $\s_1$ and $\s_2$ cyclically permute the blocks and $\tau \s_1 \tau$ acts as $\s_1^{-1}$, it follows
that $\langle \sigma_1, \tau \rangle$ is dihedral of order $8$ and it intersects $\Lambda$ trivially. Hence $G \cong \Lambda \rtimes \langle \sigma_1, \tau \rangle \cong \Lambda \rtimes D_4$.
\end{proof}

Consider the permutation
\begin{align*}
R := \prod_{i=0}^{n-1} & (4i+1, 4t-4i+7)(4i+2, 4t-4i+8) \\
& (4i+3, 4t-4i+5)(4i+4, 4t-4i+6) \\
& \left((4i+1)', (4t-4i+7)'\right)\left((4i+2)', (4t-4i+8)'\right) \\
& \left((4i+3)', (4t-4i+5)'\right)\left((4i+4)', (4t-4i+6)'\right).
\end{align*}
Geometrically, we can view $R$ as a half turn through the vertex at the center of Figure~\ref{flaglabels}, both in $G$ and in $G'$.
Furthermore, consider the permutation
\[ p := (1, 1') \cdots (4n, (4n)'). \]
Then the orbits of the vertices under $\langle R, p \rangle$ have size $4$, and it is easily verified that
these form a block system for $\G$. In other words, $\G$ permutes these blocks and the elements within each block.
The same is true for the subgroup $\Lambda$ of $\G$.

Now we observe that there are two kinds of blocks: those with
one element from each of the classes $1$, $1'$, $3$, and $3'$, and those with one element from each
of the classes $2$, $2'$, $4$, and $4'$. Since $\Lambda$ stabilizes each of these classes, it follows that it is merely a permutation group on the set of these blocks. Furthermore,
since $n$ of the blocks contain a vertex in class $1$ and $n$ contain a vertex in class $2$, the group
$\Lambda$ is isomorphic to a subgroup of $S_n \times S_n$.

Let $\lambda \in \Lambda$. Then we can write
\begin{equation}\label{eq:lambdaalpha}
\lambda = \alpha_1 \tau \alpha_2 \tau \cdots \alpha_r \tau,
\end{equation}
where each $\alpha_i \in \langle \s_1, \s_2 \rangle$. Furthermore, we may assume that each $\alpha_i \in \Lambda$,
since otherwise we can use the fact that $\tau = \s_1 \tau \s_1$ to replace the subword $\alpha_i \tau \alpha_{i+1}$
with $\alpha_i \s_1^s \tau \s_1^s \alpha_{i+1}$, where $s$ is chosen so that $\alpha_i \s_1^s \in \Lambda$. Now, $x = \sigma_2^{-1} \sigma_1\in \langle \s_1, \s_2 \rangle$ stabilizes each block and acts transitively
on the vertices in each block. Then, since $\langle \s_1, \s_2 \rangle$ acts regularly on the vertices of $\calK$, we have that $\langle x \rangle$ acts regularly on the set of vertices in any given block, and it follows that each $\alpha_i$ must be a power of $x$. Therefore,
$\lambda$ can be written as a product of powers of $x$ and $\tau x \tau$, and so
$\Lambda = \langle x, \tau x \tau \rangle$. (Note here that, since $\lambda \in \Lambda$, there must be an even number of factors $\tau$ in (\ref{eq:lambdaalpha}).)

We note that $x$ sends vertex
$4i+1$ to $4i+5$. Conjugating $x$ by $\tau$ causes us to interchange
$5$ with $4t+1$ in the cyclic structure, while preserving every other vertex of the form $4i+1$. In other words, while $x$ permutes
the vertices in class $1$ in the cycle $(1, 5, 9, \ldots, 4n-3)$, the permutation $\tau x \tau$ permutes the vertices
in the cycle
\[(1, 4t+1, 9, \ldots, 4t-3, 5, 4t+5, 4t+9, \ldots, 4n-3).\]
A similar analysis shows that each of $x$ and $\tau x \tau$ permutes the vertices in class $2$ in a single $n$-cycle.
Since $x$ and $\tau x \tau$ both
act on classes $1$ and $2$ as odd cycles, we see that $\Lambda$ is isomorphic to a subgroup of $A_n\times A_n$. We have then proved the following.

\begin{lemma}\label{l:directproduct}
Let $\Po$  be the $4$-polytope in Theorem \ref{t:4poly} and let $\Lambda$ be the subgroup of $\G(\Po)$ stabilizing all classes of vertices of the GPR graph constructed above for $\Po$.
Then $\Lambda \le A_n \times A_n$.
\end{lemma}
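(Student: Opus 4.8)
The plan is to show that $\Lambda \le A_n \times A_n$ by establishing that the two generators of $\Lambda$, namely $x$ and $\tau x \tau$, each act as an even permutation on both of the relevant index sets. By the preceding discussion, $\Lambda = \langle x, \tau x \tau \rangle$, and $\Lambda$ stabilizes all vertex classes, so in particular it preserves the two families of blocks: the $n$ blocks meeting classes $1, 1', 3, 3'$ and the $n$ blocks meeting classes $2, 2', 4, 4'$. Thus $\Lambda$ embeds into $S_n \times S_n$ via its action on these two block families, and it suffices to verify that the images of $x$ and $\tau x \tau$ lie in $A_n$ in each coordinate.

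First I would record that $x$ acts on class $1$ as the single $n$-cycle $(1, 5, 9, \ldots, 4n-3)$, which is an $n$-cycle; since $n = b^2 + c^2$ is prime and $n \equiv 1 \pmod 4$, $n$ is odd, so this cycle is an even permutation. Next I would analyze $\tau x \tau$ on class $1$. Because $\tau$ fixes every vertex of the form $4i+1$ except that it interchanges $5$ with $(4t+1)'$ and $4t+1$ with $5'$, conjugating $x$ by $\tau$ has the effect of swapping the roles of $5$ and $4t+1$ in the cycle structure while leaving every other vertex of class $1$ in place. This rewrites the $n$-cycle of $x$ as the single $n$-cycle
\[(1, 4t+1, 9, \ldots, 4t-3, 5, 4t+5, 4t+9, \ldots, 4n-3),\]
again an $n$-cycle and hence even since $n$ is odd. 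The same computation carried out on class $2$ (using the action of $\s_1, \s_2$ and $\tau$ on vertices of the form $4i+2$) shows that both $x$ and $\tau x \tau$ act on class $2$ as single $n$-cycles as well, so both generators are even in both coordinates.

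The key observation making the argument clean is that oddness of $n$ forces every $n$-cycle to be an even permutation; the substantive content is therefore purely in confirming that each generator is genuinely a single $n$-cycle on each class (rather than a product of several shorter cycles with different parity). For $x$ this is immediate from its explicit cycle presentation. For $\tau x \tau$ the main thing to check is that conjugation by $\tau$, which only rearranges two entries in the supporting cycle, cannot split the $n$-cycle into pieces; this follows because interchanging two elements in the cyclic order of a single $n$-cycle again yields a single $n$-cycle. Since $x$ and $\tau x \tau$ generate $\Lambda$ and both land in $A_n$ under each of the two projections into $S_n$, we conclude $\Lambda \le A_n \times A_n$, as claimed.

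The step I expect to be the main obstacle is the explicit verification that $\tau x \tau$ acts on class $2$ (and analogously on the primed classes, via the block identifications) as a single $n$-cycle rather than as a product of disjoint cycles. The asymmetric way $\tau$ treats the four corner types — sending class $2$ to class $4'$ and class $4$ to class $2'$ rather than fixing each class — means the cycle bookkeeping for class $2$ is less transparent than for class $1$, so this is where a careful index chase is required; but once single-cycle structure is confirmed, evenness is automatic from the parity of $n$.
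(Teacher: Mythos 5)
Your proposal is correct and follows essentially the same route as the paper: the paper likewise embeds $\Lambda$ into $S_n \times S_n$ via the two families of blocks (those meeting classes $1,1',3,3'$ and those meeting classes $2,2',4,4'$), and concludes by checking that the generators $x$ and $\tau x \tau$ act on classes $1$ and $2$ as single $n$-cycles, hence as even permutations since $n$ is odd, with exactly the cycle $(1, 4t+1, 9, \ldots, 4t-3, 5, 4t+5, \ldots, 4n-3)$ you exhibit for $\tau x \tau$. The generation fact $\Lambda = \langle x, \tau x \tau \rangle$ that you cite is indeed established in the paper's discussion immediately preceding the lemma (via the rewriting trick $\tau = \s_1 \tau \s_1$ and the regular action of $\langle x \rangle$ on each block), so your appeal to it is legitimate.
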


Each element $\lambda \in \Lambda$ can be represented as $\lambda = (\lambda_1, \lambda_2)$ where $\lambda_i$ represents the permutation induced on flags type $i$ by $\lambda$. We denote by $\Lambda_1 = \{(\lambda_1, \eps) \in \Lambda\}$ and $\Lambda_2 = \{(\eps, \lambda_2) \in \Lambda\}$. Then
\begin{equation}\label{eq:lambdaproduct}
\Lambda_1 \times \Lambda_2 \le \Lambda.
\end{equation}

We are now ready to determine $\G(\Po)$.

\begin{theorem}
Let $\Po$  be the $4$-polytope in Theorem \ref{t:4poly}
and assume that $b^2 + c^2 = n \geq 13$, with $n$ prime. %Let $\G = \langle \s_1, \s_2, \tau \rangle$.
Then $\G(\Po) \cong (A_n \times A_n) \rtimes D_4$.
\end{theorem}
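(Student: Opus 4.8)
The plan is to leverage the semidirect-product decomposition $\G(\Po) \cong \Lambda \rtimes D_4$ from Lemma~\ref{l:corners} and to identify $\Lambda$ precisely. Since Lemma~\ref{l:directproduct} already gives $\Lambda \le A_n \times A_n$, and (\ref{eq:lambdaproduct}) gives $\Lambda_1 \times \Lambda_2 \le \Lambda$, it suffices to prove that each projection is as large as possible, namely $\Lambda_1 = \Lambda_2 = A_n$. Indeed, this would yield $A_n \times A_n = \Lambda_1 \times \Lambda_2 \le \Lambda \le A_n \times A_n$, so $\Lambda = A_n \times A_n$, and the decomposition of Lemma~\ref{l:corners} then gives $\G(\Po) \cong (A_n \times A_n) \rtimes D_4$.

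To prove $\Lambda_1 = A_n$, I regard $\Lambda_1$ as a permutation group on the $n$ vertices of class $1$. This group is transitive, because $x = \s_2^{-1}\s_1$ acts on class $1$ as the $n$-cycle $(1,5,9,\dots,4n-3)$; as $n$ is prime, a transitive group of degree $n$ is automatically primitive. The strategy is then to exhibit a $3$-cycle in $\Lambda_1$ and apply Jordan's theorem (Theorem~\ref{t:jordan}) with $p = 3$, which is permissible exactly because $3 < n-2$ when $n \ge 13$. Jordan's theorem forces $\Lambda_1 \in \{A_n, S_n\}$, and since $\Lambda_1 \le A_n$ we conclude $\Lambda_1 = A_n$.

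To manufacture the $3$-cycle I would combine the three elements whose cycle structures on class $1$ are already recorded. Identifying each class-$1$ vertex $4i+1$ with $i \in \mathbb{Z}/n$, the element $x$ acts as $i \mapsto i+1$; the element $\tau x \tau$ acts as the same cycle except that the positions of the two vertices indexed $1$ and $t$ are interchanged; and $\s_3^{-2}$ (which lies in $\Lambda$, since $\s_3$ permutes the eight vertex classes in four transpositions, so that $\s_3^2$ fixes every class and preserves $G$) acts as the $(n-2)$-cycle fixing the two indices $0$ and $t$, as described in the proof of Theorem~\ref{t:4poly}. A short calculation then shows that $(\tau x \tau)\,x^{-1}$ restricts on class $1$ to the double transposition $(0,\,t-1)(1,\,t)$, while $\s_3^{-2}\,x^{-(t-1)}$ restricts to $(0,\,1-t)(1,\,t)$. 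These two double transpositions share the transposition $(1,\,t)$ and also the point $0$, so their product is the single $3$-cycle $(0,\,t-1,\,1-t)$, which is nondegenerate precisely because $t \notin \{1,2\}$ and $2t \not\equiv 1 \pmod n$, conditions guaranteed for prime $n \ge 13$ by the relation $t^2 \equiv -1 \pmod n$. This $3$-cycle lies in $\Lambda_1$, completing the application of Jordan's theorem.

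Finally, the class-$2$ cycle structures of $x$, $\tau x \tau$ and $\s_3^{-2}$ recorded in the proofs of Theorem~\ref{t:4poly} and Lemma~\ref{l:directproduct} are entirely analogous, so the same argument gives $\Lambda_2 = A_n$, and the reduction in the first paragraph finishes the proof. I expect the main obstacle to be the bookkeeping that produces the $3$-cycle: one must track the exact cycle structures on a single vertex class carefully, since the more naive combinations (for instance products of $x$-translates of a single double transposition) tend to produce a product of two disjoint $3$-cycles rather than one, and it is the specific way the $(n-2)$-cycle of $\s_3^{-2}$ overlaps with the double transposition coming from $\tau x \tau$ that isolates a single $3$-cycle.
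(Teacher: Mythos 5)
There is a genuine gap: the element you construct does not lie in $\Lambda_1$. Your cycle computations on class $1$ are correct (indexing $4i+1$ by $i$, the element $(\tau x\tau)x^{-1}$ restricts there to $(0,t-1)(1,t)$, the element $\s_3^{-2}x^{-(t-1)}$ restricts to $(0,1-t)(1,t)$, and their product is the $3$-cycle $(0,t-1,1-t)$), but $\Lambda_1$ is by definition the subgroup of $\Lambda$ acting \emph{trivially} on the class-$2$ blocks, and your element fails this. Track the class-$2$ actions, indexing $4i+2$ by $i$: the permutation $x$ acts there as $i \mapsto i-t$, the permutation $\tau x \tau$ acts generically as $i \mapsto i+t$, and $\s_3^{-2}$ acts generically as $i \mapsto i+t-1$ (its two fixed points on class $2$ are the indices $t$ and $t+1$, i.e.\ the vertices $4t+2$ and $4t+6$, not the indices $0$ and $t$ as on class $1$). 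Hence $(\tau x\tau)x^{-1}$ translates a generic class-$2$ index by $2t$, while $\s_3^{-2}x^{-(t-1)}$ translates it by $(t-1)+t(t-1) = t^2-1 \equiv -2 \pmod{n}$, so their product moves a generic class-$2$ index by $2t-2 \not\equiv 0 \pmod{n}$ (since $t \equiv 1$ would force $n \mid 2$). So your element lies in $\Lambda$ but not in $\Lambda_1$, and Jordan's theorem cannot be invoked for $\Lambda_1$. This is precisely the difficulty the paper's proof is engineered around: it forms $z = x\tau x\tau$, observes that its class-$1$ cycle lengths $m_1, m_2, m_2$ are all odd while its class-$2$ action is a double transposition, so that $z^{m_1 m_2}$ is \emph{trivial on class $1$} yet still a double transposition on class $2$; multiplying by the conjugate $w^{-1}z^{m_1 m_2}w$ then produces a $3$-cycle on class $2$ while remaining trivial on class $1$. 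Any repair of your argument needs an analogous step that first annihilates the action on the complementary class (e.g.\ raising your element to a power killing its class-$2$ action without killing the $3$-cycle, which is not automatic since the class-$2$ order may be divisible by $3$).

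A secondary flaw: you justify transitivity (hence primitivity) of $\Lambda_1$ on class $1$ by the $n$-cycle $x$, but $x \notin \Lambda_1$, since $x$ moves class $2$ nontrivially. This part is repairable, and the same repair is implicitly needed in the paper: $\Lambda_1$ is the kernel of the action of $\Lambda$ on the class-$2$ blocks, hence normal in $\Lambda$; the group $\Lambda$ is transitive of prime degree $n$ on class $1$ (via $x$), hence primitive, and a nontrivial normal subgroup of a primitive group is transitive, so once a genuine $3$-cycle in $\Lambda_1$ is exhibited, $\Lambda_1$ is transitive of prime degree and therefore primitive. Your opening reduction --- the sandwich $A_n \times A_n = \Lambda_1 \times \Lambda_2 \le \Lambda \le A_n \times A_n$ followed by Lemma \ref{l:corners} --- coincides with the paper's and is sound (noting that $\Lambda_1$, $\Lambda_2$ must be read as the intersections with the factors, not the projections, for this to work); the missing piece is solely an element that is a $3$-cycle on one class and trivial on the other.
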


\begin{proof}
The group $\Lambda_1$ is the subgroup of $\Lambda$ that fixes all vertices of the form $4i+2$ while $\Lambda_2$ is the subgroup of
$\Lambda$ that fixes all vertices of the form $4i+1$. Then $\Lambda_2 = \s_2^{-1} \Lambda_1 \s_2$. In particular, $\Lambda_1 \cong \Lambda_2$,
so it suffices to determine the structure of $\Lambda_2$. Our goal is to show that $\Lambda_2 \cong A_n$.
The group $\Lambda_2$ acts primitively
on the vertices in class $2$ (since the number of vertices, $n$, is prime).
It will then suffice to show that $\Lambda$ contains an element that acts trivially on class $1$ and as a $3$-cycle on
class $2$. It will follow from Theorem \ref{t:jordan} that $\Lambda_2 \cong A_n$. Finally, Lemma \ref{l:directproduct} and (\ref{eq:lambdaproduct}) will then show that $\Lambda \cong A_n \times A_n$, and Lemma \ref{l:corners} concludes the proof.

Let us define $z := x \tau x \tau$.
On most vertices of the form $4i+1$, the element $z$ acts like $x^2$, carrying $4i+1$ to $4i+9$.
The only exceptions are that it sends vertex $1$ to $4t+5$, vertex $4t-7$ to $5$, vertex $4t-3$ to $9$, and vertex $4n-3$ to $4t+1$.
Then if $t$ is even, we get that, when restricted to corners type 1,
\begin{align*}
z =  \,& (5, \, 13, \, \ldots, \, 4t-3, \, 9, \, 17, \, \ldots, \, 4t-7) \\
& (1, \, 4t+5, \, 4t+13, \, \ldots, \, 4n-7) \\
& (4t+1, \, 4t+9, \, \ldots, \, 4n-3),
\end{align*}
and if $t$ is odd, then
\begin{align*}
z = \,& (1, \, 4t+5, \, 4t+13, \, \ldots, \, 4n-3, \, 4t+1, \, 4t+9, \, \ldots, \, 4n-7) \\
& (5, \, 13, \, \ldots, \, 4t-7) \\
& (9, \, 17, \, \ldots, \, 4t-3).
\end{align*}
In the first case, the cycle sizes are $t-1$, $(n-t+1)/2$ and $(n-t+1)/2$, and in the second case they are $n-t+1$,
$(t-1)/2$ and $(t-1)/2$. Since we picked $t$ congruent to either $3$ or $0$ mod $4$, we see that in any case we get
three odd cycles, two of which have the same length. Let $m_1$ be the length of the first cycle, and let $m_2$ be the
length of the other two cycles. Since $n = m_1 + 2m_2$, and $n$ is prime, it must be the case that $m_1$ and $m_2$
are coprime.

We now determine the action of $z$ on class $2$. The action of $\tau x \tau$ on class $2$ is like the action of
$x$, but the cycle is reversed, and $6$ is interchanged with $4t+2$ as elements in the cyclic structure. Then $z$ fixes most vertices in
class $2$, with the exception of vertices $6$, $4t+2$, $4t+6$ and $8t+2$. Here we need to be careful to make sure
that this last vertex is actually different from the others. This follows from strong flag-connectivity, except for vertices $6$ and $8t+2$; however, if these vertices were the same then $\K \cong \{4,4\}_{(1,2)}$ and $n=5$, contradicting our hypothesis.
Then what we find is that $z$ interchanges vertex $6$ with $4t+2$ and vertex $4t+6$ with $8t+2$.
Therefore, the element $z$ is the product of a cycle of length $m_1$, two cycles of length $m_2$, and two
cycles of length $2$. In particular, when restricted to corners type $1$ and $2$ we have that $z^{m_1 m_2} = (6, 4t+2)(4t+6, 8t+2)$ since
$m_1$ and $m_2$ are both odd.

Let us define the permutation $w$ by
\[ w := \begin{cases}
\s_1^{-1} z^{m_2} \s_1 & \textrm{if $t$ is even,} \\
\s_1^{-1} z^{m_1} \s_1 & \textrm{if $t$ is odd.}
\end{cases} \]
On class $1$, the permutation $z^{m_2}$ acts as a single $m_1$-cycle, and conjugation by $\s_1$ simply increases
each entry by $1$. Similarly, $z^{m_1}$ acts as a product of two disjoint $m_2$-cycles, and again conjugation by
$\s_1$ increases each entry by $1$. So we find that
\[ w = \begin{cases}
(6, 14, \ldots, 4t-2, 10, 18, \ldots, 4t-6) & \textrm{if $t$ is even,} \\
(6, 14, \ldots, 4t-6)(10, 18, \ldots, 4t-2) & \textrm{if $t$ is odd.}
\end{cases} \]
Since $n \geq 13$ and $t^2 \equiv -1$ (mod $n$), it must be the case that $4 \leq t \leq n-4$.
Therefore, the permutation $w$ moves vertex $6$ while fixing vertices $4t+2$, $4t+6$, and $8t+2$. It follows that
$w^{-1} z^{m_1 m_2} w = (14, 4t+2)(4t+6, 8t+2)$. Hence $z^{m_1 m_2} w^{-1} z^{m_1 m_2} w = (6, 14, 4t+2)$,
giving us our desired $3$-cycle. So $\Lambda_1 \cong \Lambda_2 \cong A_n$, and the claim follows.
\end{proof}

We note that if we take $\calK = \{4, 4\}_{(1, 2)}$, then the analysis is similar, but $\G$ does not act independently
on class $1$ and class $2$. A calculation with GAP \cite{gap} shows that in this case, $\Lambda = A_5$
(and so $\G(\calP) \equiv A_5 \rtimes D_4$).

\begin{corollary}
The index of $X(\calP)$ in $\G$ is at most $8$.
\end{corollary}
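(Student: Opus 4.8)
The plan is to prove the stronger statement that the subgroup $\Lambda$ of the previous lemmas (so $\Lambda \cong A_n\times A_n$ in general, or $\Lambda\cong A_5$ in the exceptional case $\calK=\{4,4\}_{(1,2)}$) is contained in the chirality group $X(\calP)$. Once this is shown, Lemma~\ref{l:corners} gives $[\G:\Lambda]=|D_4|=8$, and hence $[\G:X(\calP)]\le[\G:\Lambda]=8$ immediately. So the whole argument reduces to locating $X(\calP)$ among the normal subgroups of $\G$.

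First I would note that $X(\calP)=\langle\ch{\calR}\rangle^{\G}$ is by definition a normal subgroup of $\G$, and that it is nontrivial: were $X(\calP)$ trivial, then $\G$ would already satisfy the enantiomorphic relations $\ch{\calR}$, forcing $\calP$ to be orientably regular rather than chiral. Hence it suffices to show that $\Lambda$ is the \emph{unique minimal normal subgroup} of $\G$, for then every nontrivial normal subgroup of $\G$ — and $X(\calP)$ in particular — must contain it.

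To identify the minimal normal subgroups I would intersect an arbitrary nontrivial normal subgroup $N\trianglelefteq\G$ with $\Lambda$. Since $A_n$ is simple for $n\ge5$, the only normal subgroups of $\Lambda\cong A_n\times A_n$ are $\{\eps\}$, the two factors $A_n\times\{\eps\}$ and $\{\eps\}\times A_n$, and $\Lambda$ itself. Now $N\cap\Lambda$ is normal in $\G$, hence invariant under conjugation by the complement $\langle\s_1,\tau\rangle\cong D_4$. Because $\Lambda_2=\s_2^{-1}\Lambda_1\s_2$, some element of $\G$ (and therefore, modulo an inner automorphism of $\Lambda$, some element of the complement) interchanges the two simple factors; consequently neither factor is $\G$-invariant, and $N\cap\Lambda\in\{\{\eps\},\Lambda\}$. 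If $N\cap\Lambda=\Lambda$ we are done.

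The main obstacle is excluding the case $N\cap\Lambda=\{\eps\}$, which is exactly the computation that $\Lambda$ is the whole socle. If $N\cap\Lambda=\{\eps\}$ then $[N,\Lambda]\subseteq N\cap\Lambda=\{\eps\}$, so $N\subseteq C_\G(\Lambda)$; thus it suffices to prove $C_\G(\Lambda)=\{\eps\}$. As $\Lambda$ is centerless, $C_\G(\Lambda)\cap\Lambda=\{\eps\}$, so a nontrivial element of $C_\G(\Lambda)$ would have the form $\lambda\delta$ with $\delta\in\langle\s_1,\tau\rangle\setminus\{\eps\}$ inducing on $\Lambda$ the same automorphism as the inner automorphism $\mathrm{conj}_{\lambda^{-1}}$. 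I would rule this out from the explicit actions already computed: the elements of $\langle\s_1,\tau\rangle$ that interchange the two simple factors induce automorphisms of $A_n\times A_n$ swapping the direct factors, which can never be inner, while the nontrivial factor-preserving elements are checked directly from the cycle descriptions of $x$ and $\tau x\tau$ to act nontrivially and non-innerly on $\Lambda$. This yields $C_\G(\Lambda)=\{\eps\}$, so $N\cap\Lambda=\Lambda$ for every nontrivial normal $N$; in particular $\Lambda\subseteq X(\calP)$, and therefore $[\G:X(\calP)]\le[\G:\Lambda]=8$.
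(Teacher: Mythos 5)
Your reduction hinges on a claim that is in fact false: $C_\G(\Lambda)$ is \emph{not} trivial, so $\Lambda$ is not the unique minimal normal subgroup of $\G$. The problem is the factor-preserving element $\s_1^2$ of your complement. Index the blocks of the first kind by their class-$1$ vertex, so $B_i \ni 4i+1$; then $B_i$ also contains $4(t+1-i)+3$, and similarly the block $C_i \ni 4i+2$ contains $4(t+1-i)+4$. Since $\s_1^2$ sends $4i+1 \mapsto 4i+3$ and $4i+2 \mapsto 4i+4$, it induces on \emph{each} of the two block sets the involution $i \mapsto t+1-i \pmod{n}$, which has exactly one fixed point (as $n$ is odd) and is therefore a product of $(n-1)/2$ transpositions; because $n = b^2+c^2 \equiv 1 \pmod 4$, this is an \emph{even} permutation. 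Since $\Lambda$ acts on the blocks as the full group $A_n \times A_n$ (this is exactly what the structure theorem gives for $n \ge 13$), there is $\mu \in \Lambda$ with the same block action as $\s_1^2$, and $g := \s_1^2 \mu^{-1}$ is then a nontrivial element of $\G$ (its action on the eight classes is $(1\,3)(2\,4)(1'\,3')(2'\,4')$) that acts trivially on the blocks. Such an element centralizes $\Lambda$: for $\lambda \in \Lambda$, the element $g^{-1}\lambda g$ lies in $\Lambda$ and has the same block action as $\lambda$, and $\Lambda$ acts faithfully on the blocks, so $g^{-1}\lambda g = \lambda$. (One can check that $g$ is precisely the half-turn $R$ used to define the blocks, so $R$ is a central involution of $\G$.) Hence $\langle g \rangle$ is a nontrivial normal subgroup of $\G$ meeting $\Lambda$ trivially, and your key assertion --- every nontrivial normal subgroup contains $\Lambda$ --- fails. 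Knowing only that $X(\calP)$ is normal and nontrivial leaves open, for instance, the possibility $X(\calP) = \langle g \rangle$, whose index is $4(n!/2)^2$, not $8$.

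The rest of your normal-subgroup analysis is sound (in particular $\tau$ really does induce a non-inner automorphism: it acts on the blocks $B_i$ as the transposition $(B_1, B_t)$, an odd permutation), and it shows that every normal subgroup of $\G$ either contains $\Lambda$ or lies inside $C_\G(\Lambda) = \langle g \rangle$. But ruling out $X(\calP) \subseteq \langle g \rangle$ cannot be done from the normal subgroup lattice alone; one needs information about actual relators, and that is what the paper does instead. It takes the defining relator $(\s_1^{-1}\s_2)^b(\s_1\s_2^{-1})^c = x^{-b}y^c$ of $\G(\calK)$, computes its enantiomorphic form, simplifies it using $\s_2^4 = \eps$ and $\s_2\s_1 = \s_1^{-1}\s_2^{-1}$ to $x^b y^c$, and observes that this is a nontrivial translation lying in $X(\calP)$; since $n$ is prime, it generates the whole translation subgroup, so $x \in X(\calP)$, hence $\tau x \tau \in X(\calP)$ by normality, and $\Lambda = \langle x, \tau x \tau \rangle \le X(\calP)$. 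Note also that the paper's argument needs only $n$ prime, whereas yours additionally leans on the structure theorem requiring $n \ge 13$ (with a separate argument for $\{4,4\}_{(1,2)}$).
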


\begin{proof}
Recall that if $\G(\calP)$ has presentation $\langle \s_1, \ldots, \s_{d} \mid \calR \rangle$, then
\[ X(\calP) = \langle \ch{\calR} \rangle^{\G(\calP)}. \]
If we can show that $x \in X(\calP)$, then it will follow that $\tau x \tau \in X(\calP)$, since $X(\calP)$ is normal.
Then it follows that $X(\calP)$ contains $\Lambda$, which has index $8$ in $\G$, and the claim will follow.

We noted earlier that $x = \s_2^{-1} \s_1$ acts as a translation on the vertices of a given class.
Similarly, defining $y = \s_1 \s_2^{-1} = \s_1 x \s_1^{-1}$, we see that this also acts as a translation
on the vertices of a given class.
Now, one of the defining relations for $\G(\calK)$ is $(\s_1^{-1} \s_2)^b(\s_1 \s_2^{-1})^c = \eps$ (see \cite{chiregas2}). In other words,
$x^{-b} y^c = \eps$. This can assumed to be one of the defining relations for $\G(\calP)$. The enantiomorphic form
of this word is $((\s_1 \s_2^2)^{-1} \s_2^{-1})^b ((\s_1 \s_2^2) \s_2)^c$. Using the relations that
$\s_2^4 = \eps$ and $\s_2 \s_1 = \s_1^{-1} \s_2^{-1}$, we can rewrite this word as
\[ (\s_2^{-2} \s_1^{-1} \s_2^{-1})^b (\s_1 \s_2^3)^c = (\s_2^{-1} \s_1)^b (\s_1 \s_2^{-1})^c, \]
which is $x^b y^c$. So $x^b y^c$ is an element of $X(\calP)$.
Furthermore, it is a nontrivial translation; otherwise, $\calK$ would not be chiral.
Now, since $b^2 + c^2$ is prime, any nontrivial translation generates the whole translation subgroup.
In particular, the group generated by $x^b y^c$ contains $x$. Therefore, $X(\calP)$ contains $x$
and the result follows.
\end{proof}

In this section we assumed that $n = b^2 + c^2$ is prime. The graph constructed from $\{4,4\}_{(b,c)}$ when $n$ is not prime is still a GPR graph of a chiral $4$-polytope with Schl\"afli type $\{4,4,2q\}$ for some $q$. However, the value of $q$ will depend on $b$ and $c$, and in general will not coincide with $2(n-2)$. The determination of the automorphism group of these $4$-polytopes requires an analysis different from the one developed above, particularly in the case when the translation to the right does not generate the entire translation subgroup of $\K$. However, the family $\mathbb{F}$ of toroidal maps $\{4,4\}_{(b,c)}$ with $n$ prime is of great interest; it contains infinitely many elements, and every other chiral toroidal map $\{4, 4\}_{(b,c)}$ is
the mix of regular toroidal maps with one or more maps in $\mathbb{F}$ (see \cite{Gabe} and \cite{MixMon} for the definition and results about mix of regular and chiral polytopes).

We conclude by noting that other possible definitions of $\tau$, matching the vertices of $G$ with the vertices of $G'$, may yield $4$-polytopes with smaller automorphism groups $\Gamma$. However, the determination of $\Gamma$ in such a general setting involves bigger complications and is out of the scope of this paper.

\end{document}